\newtheorem{thm}{Theorem}[section]
\newtheorem{cor}[thm]{Corollary}
\newtheorem{lemma}[thm]{Lemma}
\newtheorem{definition}[thm]{Definition}
\title{Topological Symmetry Groups of the Heawood Graph}
\author{Erica Flapan}
\author{Emille Davie Lawrence}
\author{Robin Wilson}
\address{Department of Mathematics, Pomona College, Claremont, CA 91711}
\address{Department of Mathematics and Statistics, University of San Francisco, San Francisco, CA 94117}
\address{Department of Mathematics, Cal Poly Pomona, Pomona, CA 91768}
\date{\today}
\keywords{topological  symmetry groups, spatial graphs, molecular symmetries, Heawood graph}
\subjclass{ 57M25, 57M15, 57M27, 92E10, 05C10}
\thanks{The first author was partially supported by NSF Grant DMS-1607744}
\begin{document}

\begin{abstract} We classify all groups which can occur as the topological symmetry group of some embedding of the Heawood graph in $S^3$.
\end{abstract}

\maketitle

\section{Introduction}

Topological symmetry groups were originally introduced in order to classify the symmetries of non-rigid molecules. In particular, the symmetries of rigid molecules are represented by the {\it point group}, which is the group of rigid motions of the molecule in space. However, non-rigid molecules can have symmetries which are not included in the point group. The symmetries of such molecules can instead be represented by the subgroup of the automorphism group of the molecular graph which are induced by homeomorphisms of the graph in space. In this way, the molecular graph is treated as a topological object, and hence this group is referred to as the {\it topological symmetry group} of the graph in space. 

Although, initially motivated by chemistry, the study of topological symmetry groups of graphs embedded in $S^3$ can be thought of as a generalization of the study of symmetries of knots and links.  Various results have been obtained about topological symmetry groups in general (\cite{MR3049299}, \cite{MR2931423}, \cite{TSG1}, \cite{TSG2},  \cite{MR3008889}, \cite{MR2909628}) as well as topological symmetry groups of embeddings of particular graphs or families of graphs in $S^3$ (\cite{MR3230775}, \cite{MR2754164}, \cite{MR3312620}, \cite{MR3208292}, \cite{MR2821428}, \cite{MR3104921}, \cite{MR3543135}, \cite{MR3264518}, \cite{MR1451857}).

In this paper, we classify the topological symmetry groups of embeddings of the Heawood graph in $S^3$. This graph, denoted by $C_{14}$, is illustrated in Figure~\ref{Heawood}.  The Heawood graph is of interest to topologists because it is obtained from the intrinsically knotted graph $K_7$ by $\Delta-Y$ moves \cite{MR2911083}, and hence is itself intrinsically knotted.  This means that every embedding of $C_{14}$ in $S^3$ contains a non-trivial knot. It also follows from \cite{MR3181638} that $C_{14}$ is intrinsically chiral, and hence no embedding of $C_{14}$ in $S^3$ has an orientation reversing homeomorphism.

\begin{figure}[h!]
\begin{center}
\includegraphics[width=.3\textwidth]{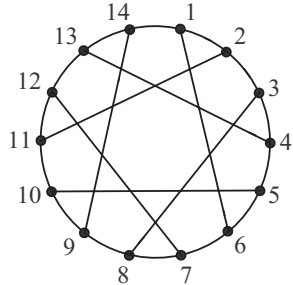}
\caption{The Heawood graph, which we denote by $C_{14}$.}
\label{Heawood}
\end{center}
\end{figure}

We begin with some terminology.

\begin{definition}  Let $\Gamma$ be a graph embedded in $S^3$.  We define the {\bf topological symmetry group} $\mathrm{TSG}(\Gamma)$ as the subgroup of the automorphism group $\mathrm{Aut}(\Gamma)$ induced by homeomorphisms of $(S^3,\Gamma)$.   We define the {\bf orientation preserving topological symmetry group} $\mathrm{TSG}_+(\Gamma)$ as the subgroup of $\mathrm{Aut}(\Gamma)$ induced by orientation preserving homeomorphisms of $(S^3,\Gamma)$. \end{definition}

\begin{definition}  Let $G$ be a group and let $\gamma$ denote an abstract graph.  If there is some embedding $\Gamma$ of $\gamma$ in $S^3$ such that $\mathrm{TSG}(\Gamma)= G$, then we say that $G$ is {\bf  realizable} for $\gamma$.  If there is some embedding $\Gamma$ of $\gamma$ in $S^3$ such that $\mathrm{TSG}_+(\Gamma)= G$, then we say that the group $G$ is {\bf positively realizable} for $\gamma$.\end{definition}

\begin{definition} Let $\varphi$ be an automorphism of an abstract graph $\gamma$.  We say $\varphi$ is {\bf realizable} if for some embedding $\Gamma$ of $\gamma$ in $S^3$, the automorphism $\varphi$ is induced by a homeomorphism of $(S^3,\Gamma)$.  If such a homeomorphism exist which is orientation preserving, then we say $\varphi$ is {\bf positively realizable}.\end{definition}

Since the Heawood graph is intrinsically chiral, a group is realizable if and only if it is positively realizable.  Our main result is the following classification theorem.

\begin{thm} \label{realizable} 
A group $G$ is realizable as the topological symmetry group for $C_{14}$ if and only if $G$ is the trivial group, $\mathbb{Z}_2$, $\mathbb{Z}_3$, $\mathbb{Z}_6$, $\mathbb{Z}_7$, $D_3$, or $D_7$. 
 \end{thm}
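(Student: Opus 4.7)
The plan is to split the proof into two directions, exploiting $\mathrm{Aut}(C_{14}) \cong \mathrm{PGL}(2,7)$, a group of order $336$. Since $C_{14}$ is intrinsically chiral, realizability coincides with positive realizability, and we may assume (via the equivariant geometrization theorem for finite group actions on $S^3$) that any candidate $G$ acts on $(S^3,\Gamma)$ by orientation-preserving isometries, i.e.\ as a subgroup of $\mathrm{SO}(4)$. Using Dickson's classification of subgroups of $\mathrm{PGL}(2,q)$ with $q=7$, the subgroups of $\mathrm{Aut}(C_{14})$ are: the trivial group, $\mathbb{Z}_n$ for $n\in\{2,3,4,6,7,8\}$, $D_n$ for $n\in\{2,3,4,6,7,8\}$, $A_4$, $S_4$, the Frobenius groups $F_{21}=\mathbb{Z}_7\rtimes\mathbb{Z}_3$ and $F_{42}=\mathbb{Z}_7\rtimes\mathbb{Z}_6$, $\mathrm{PSL}(2,7)$, and $\mathrm{PGL}(2,7)$. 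The seven groups named in the theorem are precisely those to be shown realizable; the remaining twelve must be shown unrealizable.

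For the non-realizability direction, I would first observe that it suffices to rule out the three minimal forbidden subgroups $\mathbb{Z}_4$, $D_2$, and $F_{21}$: each of $\mathbb{Z}_8$, $D_4$, $D_8$, $S_4$, $\mathrm{PSL}(2,7)$, and $\mathrm{PGL}(2,7)$ contains $\mathbb{Z}_4$; each of $D_6$ and $A_4$ contains $D_2$; and $F_{42}$ contains $F_{21}$. To prove the three base cases I would combine the resolution of the Smith conjecture (so that any cyclic subgroup of a finite action on $S^3$ has either empty fixed set or an unknotted fixed circle) with the combinatorial structure of $C_{14}$. For $\mathbb{Z}_4$: enumerate the cycle structures of order-$4$ automorphisms on $V(C_{14})$ using the bipartition into the two $\mathrm{PSL}(2,7)$-orbits, then show that the three edges at a vertex fixed by $\varphi^2$ cannot be consistently permuted by a rotation of period $4$ in $S^3$. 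For $D_2$: two commuting orientation-preserving involutions of $S^3$ have linked unknotted fixed circles, and the girth-$6$ combinatorics of $C_{14}$ obstructs the forced arrangement of fixed vertices and edges on these circles. For $F_{21}$: the interaction of an order-$3$ and an order-$7$ rotation forces an incidence pattern on fixed sets, governed by the Fano-plane structure encoded by the $\mathrm{PSL}(2,7)$-action, that cannot be realized in $S^3$.

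For the realizability direction, I would construct an explicit embedding for each of the seven listed groups. The cyclic cases $\mathbb{Z}_n$ for $n\in\{2,3,6,7\}$ are built by placing the $14$ vertices in orbits of a linear rotation of $S^3$ about an axis circle and routing the edges equivariantly, choosing an element of the correct order and cycle type in $\mathrm{PGL}(2,7)$. The dihedral cases $D_3$ and $D_7$ are realized using two commuting rotation subgroups of $\mathrm{SO}(4)$ whose axes are linked great circles, producing a dihedral action on $S^3$ that matches the corresponding $D_n\subset\mathrm{PGL}(2,7)$. Having exhibited an embedding $\Gamma$ whose topological symmetry group contains the target group $G$, I would then conclude $\mathrm{TSG}(\Gamma)=G$ by invoking the non-realizability results above: no subgroup of $\mathrm{PGL}(2,7)$ strictly containing $G$ (from the seven listed) is realizable, so $\mathrm{TSG}(\Gamma)$ cannot exceed $G$.

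The main obstacle I expect is establishing the non-realizability of $\mathbb{Z}_4$, since this one case underpins the elimination of the bulk of the larger candidate groups. The argument must rule out \emph{every} conjugacy class of order-$4$ elements of $\mathrm{Aut}(C_{14})$ by showing none is induced by any homeomorphism of any embedding, which requires a careful match between the combinatorial data of the automorphism (cycle structure on vertices, edges, and $6$-cycles) and the geometric constraint that the isometry fix an unknotted circle meeting $\Gamma$ compatibly. The $D_2$ case is comparable in difficulty because neither generating involution is individually forbidden, so the argument must genuinely exploit the linking of the two involutions' fixed circles; constructing the $D_7$ realization, and verifying that its symmetry group is no larger than $D_7$, is the most delicate step on the realization side.
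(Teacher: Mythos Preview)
Your reduction of the non-realizability direction to the three minimal obstructions $\mathbb{Z}_4$, $D_2$, and $F_{21}$ is correct and in fact slightly cleaner than the paper's organization: the paper eliminates $A_4$ separately by a Burnside count, whereas your observation that $D_2\leq A_4$ handles it for free once $D_2$ is excluded.  The specific arguments you sketch for the three base cases, however, diverge from the paper's.  For $\mathbb{Z}_4$ the paper does not analyze cycle types directly; it uses Nikkuni's congruence on arf invariants of $12$- and $14$-cycles to force any realizable $2$-power automorphism to leave two such cycles invariant, and then finishes combinatorially.  For $D_2$ the paper uses an elementary parity argument on the $21$ edges (two commuting involutions must share an invariant edge, forcing a fixed vertex) rather than linking of fixed circles.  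For $F_{21}$ the paper invokes an $\mathrm{SO}(4)$ classification theorem (after geometrization) to see that a group of order $21$ with no element of order $21$ cannot act, rather than Fano-plane combinatorics.  Your proposed routes are plausible but would need substantial work to complete; the paper's are sharper and rely on citable results.

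There is a genuine gap in your realizability direction.  Your plan is to exhibit, for each target $G$, an embedding $\Gamma$ with $G\leq\mathrm{TSG}(\Gamma)$, and then conclude $\mathrm{TSG}(\Gamma)=G$ because ``no subgroup of $\mathrm{PGL}(2,7)$ strictly containing $G$ is realizable.''  This inference is valid only when $G$ is \emph{maximal} among the seven listed groups, i.e.\ for $D_7$, $D_3$, and $\mathbb{Z}_6$.  It fails for $\mathbb{Z}_7$, $\mathbb{Z}_3$, $\mathbb{Z}_2$, and the trivial group, each of which sits inside another realizable group ($\mathbb{Z}_7<D_7$, $\mathbb{Z}_3<D_3$, $\mathbb{Z}_2<D_7$, etc.); nothing you have proved prevents $\mathrm{TSG}(\Gamma)$ from being one of those larger realizable groups.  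The paper closes this gap in two ways you do not mention: it decorates the embeddings with local knots (trefoils, or non-invertible knots for the $\mathbb{Z}_3$ case) so that a distinguished cycle is forced to be invariant, pinning down $\mathrm{TSG}(\Gamma)$ from above; and it invokes a general theorem that, for a $3$-connected graph with an edge not pointwise fixed by any nontrivial element of $\mathrm{TSG}_+(\Gamma)$, every subgroup of $\mathrm{TSG}_+(\Gamma)$ is itself positively realizable, which immediately yields $\mathbb{Z}_7$, $\mathbb{Z}_2$, and the trivial group from the $D_7$ embedding.  Without one of these mechanisms your argument does not establish exact realizability for the four smaller groups.
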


In Section 2, we present some background material about $C_{14}$.  In Section 3, we determine which of the automorphisms of $C_{14}$ are realizable.  We then use the results of Section 3 to prove our main result in Section 4. 

\medskip

\section {Background about the Heawood graph}

We will be interested in the action of automorphisms of $C_{14}$ on cycles of particular lengths.   The graph $C_{14}$ has 28 $6$-cycles, its shortest cycles, and 24 $14$-cycles \cite{MR0038078}, \cite{MR1892690}. The following results about the $12$-cycles and $14$-cycles are proved in the paper \cite{1907.11978}.  While some of these results may be well known, the authors could not find proofs in the graph theory literature.

\begin{lemma}\label{C14Facts} \cite{1907.11978}  

\begin{enumerate}
\item $C_{14}$ has $56$ $12$-cycles. 

\item $\mathrm{Aut}(C_{14})$ acts transitively on the set of $14$-cycles and the set of $12$-cycles.  

\item The graph obtained from $C_{14}$ by removing any pair of vertices which are a distance $3$ apart has exactly two $12$-cycles. 
\end{enumerate}
\end{lemma}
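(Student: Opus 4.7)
My plan is to exploit the concrete realization of $C_{14}$ as the point-line incidence graph of the Fano plane, for which $\mathrm{Aut}(C_{14}) \cong PGL(2,7)$ has order $336$ and is distance-transitive on this $3$-regular bipartite graph of girth $6$ and diameter $3$. Fix the bipartition classes $A, B$ of size $7$; since $C_{14}$ is bipartite, any cycle of length $2k$ meets each class in exactly $k$ vertices, a parity constraint that will do much of the work.

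For part (2), I would establish transitivity on $14$-cycles by combining the order-$14$ cyclic automorphism visible from the LCF code $[5,-5]^7$ with a well-chosen involution to show that the stabilizer of any Hamiltonian cycle has order $14$, yielding an orbit of size $336/14 = 24$, matching the total count. For the $12$-cycles a parallel argument identifies a dihedral stabilizer of order $6$ preserving a chosen $12$-cycle, giving an orbit of size $336/6 = 56$, which agrees with the count asserted in part (1) and establishes transitivity.

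For part (3), I would use distance-transitivity of $C_{14}$ to reduce the problem to a single representative pair $\{a,b\}$ at distance $3$. In $C_{14} - \{a,b\}$ the six vertices that were neighbors of $a$ or of $b$ each drop from degree $3$ to degree $2$, so on any Hamiltonian cycle of $C_{14} - \{a,b\}$ their two remaining incident edges are forced. Following these forced edges and using the girth-$6$ hypothesis to rule out short near-cycles reduces the enumeration to a short case analysis that produces exactly two Hamiltonian cycles of $C_{14} - \{a,b\}$, hence exactly two $12$-cycles of $C_{14}$ avoiding $\{a,b\}$.

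For part (1), I would use a bipartite double-count informed by (3). Each $12$-cycle omits exactly one pair $(a,b) \in A \times B$, and under $\mathrm{Aut}(C_{14})$ such pairs split into $21$ adjacent pairs and $28$ pairs at distance $3$. By (3) the distance-$3$ pairs contribute $28 \cdot 2 = 56$; to finish the count it suffices to show that $C_{14} - \{a,b\}$ is non-Hamiltonian whenever $a \sim b$, again via the forced-edge analysis on the reduced $12$-vertex graph. I expect the principal obstacle to be precisely this case-analytic core used in (3) and its adjacent-pair analogue: the forced-edge bookkeeping must be carried out carefully, exploiting girth $6$ both to discard short obstructions and to avoid over- or under-counting, and it is here that any hidden subtlety in the lemma is likely to live.
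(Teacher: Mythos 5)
First, a point of comparison: the paper does not prove this lemma at all --- it is quoted from \cite{1907.11978} --- so there is no internal proof to measure your plan against; I can only assess it on its own terms. The overall architecture (orbit--stabilizer counting for (2), distance-transitivity plus forced-edge enumeration for (3), and a double count over omitted pairs for (1)) is reasonable, provided the parts are established in the order (3), (1), (2): as written, your transitivity argument for $12$-cycles appeals to the count $56$ ``asserted in part (1),'' which you only derive afterwards from (3).

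The one concrete error is the ``order-$14$ cyclic automorphism visible from the LCF code $[5,-5]^7$.'' No such automorphism exists: the paper records that the nontrivial elements of $\mathrm{Aut}(C_{14})\cong\mathrm{PGL}(2,7)$ have orders $2,3,4,6,7,8$ only, and indeed a one-step rotation of the LCF labelling sends the chord $\{v_0,v_5\}$ to $\{v_1,v_6\}$, which is not an edge (the chords alternate $+5,-5$, so only the two-step rotation, of order $7$, survives). This matters quantitatively: if the stabilizer of a Hamiltonian cycle contained an order-$14$ rotation together with an involution, it would have order at least $28$ and the orbit would have size at most $336/28=12<24$, so your argument would fail to prove transitivity. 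The correct statement is that the stabilizer contains the dihedral group of order $14$ generated by the two-step rotation and the edge-midpoint reflection $v_i\mapsto v_{13-i}$, and the matching upper bound $|\mathrm{stab}|\le 14$ holds because any larger subgroup of the order-$28$ dihedral group of the $14$-gon would contain the order-$14$ rotation. You need the analogous upper bound for the $12$-cycle stabilizer as well (in particular ruling out the two-step rotation of the $12$-gon, an element of order $6$ --- an order that \emph{does} occur in $\mathrm{PGL}(2,7)$ --- which would force a stabilizer of order $12$ and an orbit of size $28$); merely exhibiting a subgroup of order $6$ is not enough. Finally, the two case analyses you defer --- the enumeration showing exactly two Hamiltonian cycles in $C_{14}-\{a,b\}$ when $a,b$ are at distance $3$, and non-Hamiltonicity of $C_{14}-\{a,b\}$ when $a\sim b$ --- are where essentially all of the content of (1) and (3) lives, so the proposal as it stands is a plausible outline rather than a proof.
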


\medskip

By part (2) of Lemma~\ref{C14Facts}, we can assume that any $14$-cycle in $C_{14}$ looks like the outer circle in Figure~\ref{Heawood} and any $12$-cycle looks like the outer circle in Figure~\ref{12cycle}.  We will always label the vertices of $C_{14}$ either as in Figure~\ref{Heawood} or as in Figure~\ref{12cycle}.

  \begin{figure}[h!]
\begin{center}
\includegraphics[width=5cm]{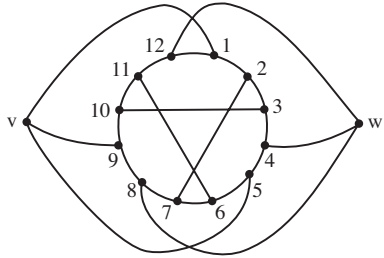}
\caption{Any $12$-cycle looks like the outer circle in this illustration.}
\label{12cycle}
\end{center}
\end{figure}

The automorphism group of $C_{14}$ is isomorphic to the projective linear group $\mathrm{PGL}(2,7)$ whose order is $336=2^4\times 3\times 7$ (see \cite{MR0038078}, \cite{MR0104735},  \cite{MR586434} ).  The program Magma was used to determine that all of the non-trivial elements of $\mathrm{PGL}(2,7)$ have order $2$, $3$, $4$, $6$, $7$, and $8$.  The following lemma gives us information about the action of automorphisms with order $3$ and $7$ on the $12$-cycles and $14$-cycles of $C_{14}$.

\medskip

\begin{lemma} \label{order3or7}  Let $\alpha$ be an automorphism of $C_{14}$.  Then the following hold.
\begin{enumerate}

\item If $\alpha$ has order $7$, then $\alpha$ setwise fixes precisely three $14$-cycles, rotating each by $\frac{2\pi n}{7}$ for some $n< 7$.

\item If $\alpha$ has order $3$, then $\alpha$ fixes precisely two vertices and setwise fixes precisely two $12$-cycles in their complement, rotating each by $\pm\frac{2\pi }{3}$.

\end{enumerate}
 \end{lemma}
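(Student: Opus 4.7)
I will handle the two parts separately, using an orbit-counting argument for (1) and a structural girth argument for (2).

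For (1), I first rule out fixed vertices for an order-$7$ automorphism $\alpha$: any fixed vertex $v$ would force its three neighbors to be fixed (since $\mathbb{Z}_7$ cannot act nontrivially on three elements), and iterating through the connected $3$-regular graph $C_{14}$ would make $\alpha$ trivial. Thus $\alpha$ permutes the $14$ vertices as two disjoint $7$-cycles, and on any invariant $14$-cycle it induces an order-$7$ cycle automorphism which, being odd-order, must be a rotation by $\frac{2\pi n}{7}$ for some $1\le n\le 6$. To show the number $f(\alpha)$ of invariant $14$-cycles is exactly three, I would double-count pairs $(\alpha,C)$. Since the $\mathrm{Aut}(C_{14})$-stabilizer of a $14$-cycle has order $336/24=14$ and (as a subgroup of the cycle's dihedral group $D_{14}$) is either $\mathbb{Z}_{14}$ or $D_7$, it contains exactly $6$ order-$7$ elements; this gives $24\cdot 6=144$ pairs. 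On the other side, $\mathrm{PGL}(2,7)$ has $48$ order-$7$ elements (Sylow theory gives $n_7\in\{1,8\}$, and $n_7=1$ is ruled out because $\mathrm{PSL}(2,7)$ is simple). Combined with the lower bound $f(\alpha)\ge 3$ coming from $f(\alpha)\equiv 24\equiv 3\pmod 7$ (since $\langle\alpha\rangle$ acts on $24$ cycles with orbit sizes $1$ or $7$), the equation $\sum_\alpha f(\alpha)=144=48\cdot 3$ forces $f(\alpha)=3$ for every $\alpha$.

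For (2), let $F=|\mathrm{Fix}(\alpha)|$. Since vertex orbits have size $1$ or $3$, $F\equiv 14\equiv 2\pmod 3$, so $F\in\{2,5,8,11\}$ in the nontrivial case. For any fixed $v$, the same propagation argument shows the three neighbors of $v$ cannot all be fixed, so they must form a single $\alpha$-orbit of size $3$. The girth-$6$ condition then forbids two fixed vertices from sharing this neighborhood $3$-cycle (which would produce a $4$-cycle), so $F$ is at most the total number of non-fixed $3$-cycles, namely $(14-F)/3$, yielding $4F\le 14$ and hence $F\le 3$. Combined with $F\equiv 2\pmod 3$, this forces $F=2$. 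Writing the fixed vertices as $u,v$, I rule out $d(u,v)=1$ (the other fixed vertex would be a neighbor, contradicting the $3$-cycle neighborhood structure) and $d(u,v)=2$ (a common neighbor $w$ together with its orbit $\{w,\alpha(w),\alpha^2(w)\}$ would all be common neighbors, again producing a $4$-cycle); since the diameter of $C_{14}$ is $3$, this leaves $d(u,v)=3$. Lemma~\ref{C14Facts}(3) then gives exactly two $12$-cycles in $C_{14}\setminus\{u,v\}$; since $\alpha$ has odd order it cannot swap this pair, so both are setwise fixed, and the induced order-$3$ automorphisms of the cycles must be rotations by $\pm\frac{2\pi}{3}$.

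The main obstacles I anticipate are (i) pinning down the exact count of three in part (1), which requires combining the mod-$7$ orbit congruence with a global double count and Sylow-theoretic input on $\mathrm{PGL}(2,7)$, and (ii) cleanly eliminating $F\in\{5,8,11\}$ via the girth obstruction in part (2); the remaining structural steps (distance analysis and rotation description on cycles) should follow routinely from Lemma~\ref{C14Facts}.
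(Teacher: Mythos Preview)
Your proposal is correct, but in both parts you take a substantially more laborious route than the paper.

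For part~(1), the paper gets the upper bound $f(\alpha)\le 3$ by an elementary edge-orbit count: since $\alpha$ has order $7$, the $21$ edges of $C_{14}$ fall into exactly three $\alpha$-orbits of size $7$, and any invariant $14$-cycle (being rotated by $\frac{2\pi n}{7}$) consists of exactly two of these three edge orbits, so there are at most $\binom{3}{2}=3$ of them. Combined with the same lower bound $f(\alpha)\ge 3$ from $24\equiv 3\pmod 7$ that you use, this finishes immediately. Your double-counting argument via the order-$14$ stabilizer, Sylow theory in $\mathrm{PGL}(2,7)$, and simplicity of $\mathrm{PSL}(2,7)$ is valid and gives the same conclusion, but it imports considerably more machinery for a fact that falls out of a one-line pigeonhole on edge orbits.

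For part~(2), the paper avoids your girth-based bound on $F$ altogether. It first observes that $56\equiv 2\pmod 3$, so $\alpha$ leaves some $12$-cycle invariant; no fixed vertex can lie on such a cycle (an order-$3$ cyclic automorphism of a $12$-cycle with a fixed point is the identity), so all fixed vertices lie in its two-vertex complement, forcing $F\le 2$ and hence $F=2$. Transitivity on $12$-cycles then places the two fixed vertices as $v,w$ in Figure~\ref{12cycle}, so $d(v,w)=3$ is read off directly, and Lemma~\ref{C14Facts}(3) finishes. Your approach---bounding $F$ via the injective map from fixed vertices to their neighborhood $3$-orbits, then eliminating $d=1,2$ by girth and invoking the diameter---is correct and self-contained, but the paper's route is shorter because the invariant $12$-cycle simultaneously bounds $F$ and pins down the distance without a separate case analysis.
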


\begin{proof}\noindent (1) Suppose that the order of $\alpha$ is $7$.  Since $C_{14}$ has 24 $14$-cycles, $\alpha$ must setwise fix at least three of them.  Observe that any $14$-cycle which is setwise fixed by $\alpha$ must be rotated by $\frac{2\pi n}{7}$ for some $n< 7$.  Thus every edge must be in an orbit of size 7.  Since there are 21 edges, there are precisely three such edge orbits.  Now any $14$-cycle which is setwise fixed must be made up of two of these three edge orbits, and hence there are at most three $14$-cycles which are invariant under $\alpha$.   It follows that there are precisely three invariant $14$-cycles. 

\medskip 

 \noindent (2) Suppose that the order of $\alpha$ is $3$.  Since there are $14$ vertices, $\alpha$ must fix at least two vertices $v$ and $w$. Furthermore, since $C_{14}$ has 56 $12$-cycles (by part (1) of Lemma~\ref{C14Facts}), $\alpha$ must setwise fix at least two $12$-cycles.  If some vertex on an invariant $12$-cycle were fixed, the entire $12$-cycle would be fixed and hence $\alpha$ could not have order $3$.  Thus neither $v$ nor $w$ can be on an invariant $12$-cycle.  By part (2) of Lemma~\ref{C14Facts}, we can assume that one of the invariant $12$-cycles is the outer circle in Figure~\ref{12cycle}, and hence $v$ and $w$ are as in  Figure~\ref{12cycle}.  Since $v$ and $w$ are a distance $3$ apart, it follows from part (3) of Lemma~\ref{C14Facts} that there are precisely two $12$-cycles in the complement of $\{v,w\}$.   Therefore, $\alpha$ must rotate each of the two $12$-cycles in the complement of $\{v,w\}$ by $\pm\frac{2\pi }{3}$.  \end{proof}

\medskip

\begin{lemma} \label{noReflection}  Let $\alpha$ be an order $2$ automorphism of $C_{14}$ which setwise fixes a $12$-cycle or a $14$-cycle. Then no vertex is fixed by $\alpha$.
\end{lemma}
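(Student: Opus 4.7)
My plan is to assume for contradiction that $\alpha$ fixes some vertex, and derive a contradiction by combining how $\alpha$ acts on the fixed cycle $C$ with the bipartite structure of $C_{14}$. Since $C_{14}$ is bipartite with parts $A,B$ of odd size $7$, and any involution swapping the parts has no fixed vertex, $\alpha$ must preserve the bipartition; it then acts as an involution on each of $A$ and $B$, and so fixes an odd (hence positive) number of vertices in each. Moreover, $\alpha|_C$ is an involution of the even cycle $C$, so it is either the identity, the half-rotation (no fixed cycle vertex), a reflection through two antipodal vertices of $C$ (fixing exactly $2$ cycle vertices), or a reflection through two antipodal edge midpoints (no fixed cycle vertex).

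First I would dispose of the easy possibilities. If $\alpha|_C$ is the identity and $|C|=14$, then $\alpha$ is the identity on $C_{14}$, a contradiction. If $\alpha|_C$ is the identity and $|C|=12$, Lemma~\ref{C14Facts}(2)(3) places the two off-cycle vertices $v,w$ at distance $3$, and $\alpha$ either fixes both --- again forcing $\alpha=\mathrm{id}$ --- or swaps them; but swapping would give $N(v)=N(w)$ and hence $4$-cycles, contradicting girth~$6$. If $\alpha|_C$ has no fixed cycle vertex, then for $\alpha$ to fix anything we need $|C|=12$ with $\alpha$ fixing both $v,w$; since $v\not\sim w$, the three neighbors of $v$ all lie on $C$, and $\alpha$ permutes them as a fixed-point-free involution, which is impossible on a $3$-element set.

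The main case is that $\alpha|_C$ is a reflection through two antipodal vertices $u,u'\in C$. When $|C|=12$, distance $6$ puts $u,u'$ in the same part, and the odd-fixed-point count in the other part forces $\alpha$ to fix both $v$ and $w$; the same neighborhood trick then gives a contradiction, since the three neighbors of $v$ in the opposite part contain no fixed vertex of $\alpha$. When $|C|=14$, the off-cycle trick is unavailable. Here I would argue that the unique non-cycle edge at $u$ must be fixed setwise by $\alpha$, so its other endpoint is fixed; since $u,u'$ are the only fixed vertices of $\alpha$, the chord at $u$ must be $\{u,u'\}$, a chord at cycle distance~$7$. Taking cyclic coordinates on $C$ with $u=0$, $u'=7$, and $\alpha(i)=-i\bmod 14$, I would rule out every candidate chord partner of vertex $1$ (excluding wrong-parity vertices, cycle neighbors, partners at cycle distance~$3$, and those forming a $4$-cycle with $\{0,7\}$) except $10$; by $\alpha$-equivariance the chord at vertex $13$ is then $\{13,4\}$, and every remaining candidate partner of vertex $2$ (namely $9$ or $11$) forms a $4$-cycle with $\{1,10\}$, contradiction.

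The main obstacle is precisely the $|C|=14$ sub-case, where the bipartite-parity shortcut does not apply because $V(C)=V(C_{14})$. The plan is to carry out the short explicit case analysis described above, using $\alpha$-equivariance of the chord matching together with the girth-$6$ property of $C_{14}$ to force each step of the chord assignment, and then to exhibit the $4$-cycle that closes off the contradiction.
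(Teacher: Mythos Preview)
Your argument is correct and takes a genuinely different route from the paper's. The paper argues directly from the explicit adjacency structure of $C_{14}$: in the $14$-cycle case it uses that the chord at vertex $1$ goes to vertex $6$, so if $\alpha$ fixes $1$ and swaps its cycle-neighbors then $6$ is also fixed, and $\alpha$ would have to interchange the two arcs of $C\setminus\{1,6\}$, which have unequal lengths ($4$ and $8$ vertices); in the $12$-cycle case it uses that every vertex on $D$ has exactly one $D$-neighbor adjacent to $\{v,w\}$, so fixing one vertex of $D$ cascades to fixing all of $D$. Your approach instead leverages the bipartite parity (each color class has odd size $7$) together with girth $6$, and in the $14$-cycle sub-case reconstructs the $\alpha$-equivariant chord matching step by step until a $4$-cycle is forced. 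The paper's proof is considerably shorter because it cashes in the known Heawood adjacencies immediately; yours is more self-contained and would transfer to other cubic bipartite girth-$6$ graphs with odd color classes.

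One small wrinkle to tighten in your $12$-cycle reflection sub-case: you assert that the three neighbors of $v$ contain no fixed vertex of $\alpha$, but this is only immediate when $u,u'$ lie in the \emph{same} color class as $v$ (so that the fixed vertices in the class opposite $v$ reduce to $\{w\}$, which is not adjacent to $v$ since $d(v,w)=3$). If instead $u,u'$ lie in the class opposite $v$, then they could in principle be neighbors of $v$; in that case simply apply the identical neighborhood argument to $w$ rather than to $v$. With this symmetric choice made explicit, the argument closes cleanly.
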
 

\begin{proof}   First suppose $\alpha$ setwise fixes a $14$-cycle and fixes at least one vertex.  Then without loss of generality, $\alpha$ setwise fixes the outer circle $C$ in Figure~\ref{Heawood} and fixes vertex $1$.  It follows that either $\alpha$ interchanges vertices $2$ and $14$ or fixes both.  In the latter case $\alpha$ would be the identity.  Thus we can assume that $\alpha$ interchanges vertices $2$ and $14$.  But since vertex $6$ is also adjacent to vertex $1$, it must also be fixed by $\alpha$. This implies that $\alpha$ interchanges the two components of $C-\{1,6\}$.  However, this is impossible because one component of $C-\{1,6\}$ has four vertices while the other has eight vertices.  

Next suppose that $\alpha$ setwise fixes a $12$-cycle.  Then without loss of generality, $\alpha$ setwise fixes the outer circle $D$ in Figure~\ref{12cycle}. Then $\alpha(\{v,w\})=\{v,w\}$.  But every vertex on $D$ has precisely one neighbor on $D$ which is adjacent to $\{v,w\}$. Thus if $\alpha$ fixed any vertex on $D$, it would have to fix every vertex on $D$, and hence would be the identity.  Now suppose $\alpha$ fixes $v$.  Since $\alpha$ has order $2$ and $v$ has three neighbors on $D$, one of these neighbors would have to be fixed by $\alpha$.  As we have already ruled out the possibility that $\alpha$ fixes a vertex on $D$, this again gives us a contradiction. \end{proof}

\medskip

\section{Realizable automorphisms of $C_{14}$}
\begin{lemma}  \label{rigid} Let $\alpha$ be a realizable automorphism of $C_{14}$.   Then the following hold.

\begin{enumerate}
\item For some embedding $\Gamma$ of $C_{14}$ in $S^3$, $\alpha$ is induced by an orientation preserving homeomorphism $h: (S^3,\Gamma) \to  (S^3,\Gamma)$ with $\mathrm{order}(h)=\mathrm{order}(\alpha)$.

\medskip

\item If $\mathrm{order}(\alpha)$ is a power of $2$, then $\alpha$ leaves at least two $14$-cycles or at least two $12$-cycles setwise invariant, and if $\mathrm{order}(\alpha)=2$, then $\alpha$ fixes no vertices.

\medskip
\item If $\mathrm{order}(\alpha)$ is even, then $\mathrm{order}(\alpha)=2$ or $6$.

\end{enumerate}
\end{lemma}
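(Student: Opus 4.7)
My plan is to use part (1) to promote each realizable $\alpha$ to a rigid finite order action on $(S^3,\Gamma)$, then analyze that action using the Fano plane description of $C_{14}$ to classify the possibilities in (2), and finally deduce (3) from (2).

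For part (1), I would invoke the standard rigidity theorem for topological symmetry groups (in the spirit of the references cited in the introduction): any realizable automorphism of a sufficiently connected graph is induced by a finite order homeomorphism of some embedding, of the same order as the automorphism. Since $C_{14}$ is intrinsically chiral (noted in the introduction), every such realizing homeomorphism is necessarily orientation preserving.

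For part (2), let $h$ be the homeomorphism from (1) of order $2^k$. A quick orbit count rules out a free action on $S^3$: such an action would be free on $\Gamma$, forcing $2^k\mid|V(\Gamma)|=14$ and $2^k\mid|E(\Gamma)|=21$, hence $2^k\mid 7$ and $k=0$. For $k\geq 1$, the Smith Conjecture applied to the involution $h^{2^{k-1}}$ produces a fixed simple closed curve $\mathcal{C}\subset S^3$. The heart of the argument is the case $k=1$. Identifying $\mathrm{Aut}(C_{14})$ with $\mathrm{PGL}(2,7)$, the involutions split into \emph{inner} ones (those in $\mathrm{PSL}(2,7)\cong \mathrm{GL}(3,2)$, which are Fano transvections) and \emph{outer} ones (the polarities in $\mathrm{PGL}(2,7)\setminus\mathrm{PSL}(2,7)$). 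A short computation shows that each transvection fixes a projective point $p=[v]$ together with every Fano line through $p$, so the corresponding vertex $p$ of $C_{14}$ is fixed along with all three of its neighbors. In any orientation preserving realization, a setwise fixed edge at a fixed vertex must be pointwise fixed (the involution of an edge cannot swap its fixed endpoint) and hence lies along $\mathcal{C}$; three such edges at $p$ are impossible since $\mathcal{C}$ has only two local branches at $p$. Therefore inner involutions are not realizable, so every realizable involution is outer, swapping the bipartition of $C_{14}$ and fixing no vertex. To count invariant $14$-cycles for an outer involution, I would observe that the stabilizer of any $14$-cycle in $\mathrm{PGL}(2,7)$ has order $14$; since $\mathrm{PGL}(2,7)$ has no element of order $14$, this stabilizer must be $D_7$, and by Lemma~\ref{noReflection} its seven reflections cannot be the vertex-fixing reflections of the $14$-cycle, so they are the edge-reflections, which swap the bipartition and are therefore outer. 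A double count then yields that each of the $28$ outer involutions fixes $24\cdot 7/28 = 6 \geq 2$ of the $14$-cycles. For $k\geq 2$, the element $\alpha^{2^{k-1}}$ is a square in $\mathrm{PGL}(2,7)$, hence lies in $\mathrm{PSL}(2,7)$, and is an inner involution of order $2$; it would be realized by $h^{2^{k-1}}$, contradicting the non-realizability of inner involutions. Thus no realizable automorphism has order $4$ or $8$, and the conclusion of (2) is vacuously true for $k\geq 2$.

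For part (3), the element orders in $\mathrm{PGL}(2,7)$ are $1,2,3,4,6,7,8$, so the possible even orders are $2,4,6,8$; the argument above eliminates $4$ and $8$, leaving $2$ and $6$. The main obstacle I anticipate is the local topological argument at a triple-fixed vertex used to rule out inner involutions: one must carefully justify that an orientation preserving involution of $(S^3,\Gamma)$ fixing a vertex $p$ and setwise fixing all three incident edges forces each of these edges to lie along the single fixed circle $\mathcal{C}$, which is impossible since $\mathcal{C}$ has only two local branches at $p$. This step is what drives the entire classification of realizable $2$-power orders and hence the whole lemma.
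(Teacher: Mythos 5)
Your proof is essentially correct, but for parts (2) and (3) it takes a genuinely different route from the paper. The paper's proof of (2) rests on Nikkuni's Conway--Gordon-type theorem: for every embedding of $C_{14}$ the mod $2$ sum of the Arf invariants over all $12$- and $14$-cycles is $1$, which forces a parity obstruction ($|T_0|$ and $|T_1|$ odd) and hence two invariant cycles for any $2$-power order homeomorphism; the ``no fixed vertices'' clause then follows from Lemma~\ref{noReflection}, and order $4$ is excluded in (3) by a short combinatorial argument on an invariant $12$-cycle. You instead exploit the structure of $\mathrm{Aut}(C_{14})\cong\mathrm{PGL}(2,7)$: the $21$ involutions of $\mathrm{PSL}(2,7)\cong\mathrm{GL}(3,2)$ are transvections fixing a vertex together with all three of its neighbors, and your triod argument (three pointwise-fixed edges meeting at a fixed vertex cannot lie in the fixed circle supplied by Smith theory) correctly rules these out; the remaining $28$ involutions swap the bipartition and fix no vertex, and your $D_7$-stabilizer double count gives the invariant $14$-cycles. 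Since squares in $\mathrm{PGL}(2,7)$ lie in $\mathrm{PSL}(2,7)$, orders $4$ and $8$ die immediately, which is a cleaner disposal of (3) than the paper's. What your approach buys is independence from Nikkuni's theorem (a nontrivial external input) and a sharper conclusion (exactly which involutions are realizable, and exactly six invariant $14$-cycles each); what it costs is reliance on the Smith Conjecture for the fixed-circle and on the specific algebra of $\mathrm{PGL}(2,7)$, so it generalizes less readily to other graphs where an Arf-invariant identity is available.

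Three small points to tighten. First, in (1) the rigidity theorem only yields a finite order homeomorphism inducing $\alpha$; the equality $\mathrm{order}(h)=\mathrm{order}(\alpha)$ requires the paper's additional Smith-theory observation that $h^{\mathrm{order}(\alpha)}$ cannot pointwise fix $\Gamma$ without being the identity (though your later arguments survive even without this equality). Second, the key step that a setwise-invariant edge with both endpoints fixed is pointwise fixed should be justified as: a finite order homeomorphism of an arc fixing both endpoints is orientation preserving on the arc and hence the identity; your phrase about ``swapping the fixed endpoint'' is not quite the right reason. Third, the conclusion that \emph{each} outer involution stabilizes $24\cdot 7/28=6$ fourteen-cycles (rather than $6$ on average) needs the standard fact that the $28$ outer involutions form a single conjugacy class of $\mathrm{PGL}(2,7)$; state and justify this, since ``at least two'' is exactly what the lemma requires.
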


\begin{proof} (1)  Since $\alpha$ is realizable, there is some embedding $\Lambda$ of $C_{14}$  in $S^3$ such that $\alpha$ is induced by a homeomorphism $g: (S^3,\Lambda) \to  (S^3,\Lambda)$.   Now by Theorem 1 of ~\cite{MR1347360}, since $C_{14}$ is $3$-connected, there is an embedding $\Gamma$ of $C_{14}$ in $S^3$ such that $\alpha$ is induced by a finite order homeomorphism $h: (S^3,\Gamma) \to  (S^3,\Gamma)$.  Furthermore, it follows from ~\cite{MR3181638} that no embedding of $C_{14}$ in $S^3$ has an orientation reversing homeomorphism.  Thus $h$ is orientation preserving.

 Let $\mathrm{order}(\alpha)=p$ and $\mathrm{order}(h)=q$. Since $h^q$ is the identity, $p\leq q$.  If $p<q$, then $h^p$ pointwise fixes $\Gamma$, yet $h^p$ is not the identity.  However, by Smith Theory~\cite{MR0013304}, the fixed point set of $h^p$ is either the empty set or $S^1$.  But this is impossible since $\Gamma$ is contained in the fixed point set of $h^p$.  Thus $\mathrm{order}(h)=\mathrm{order}(\alpha)$. 
 
\medskip
 
\noindent (2)  Suppose that $\mathrm{order}(\alpha)$ is a power of $2$.  Let $h$ be given by part (1).  Then $\mathrm{order}(h)$ is the same power of $2$.  Let $S_1$ and $S_2$ denote the sets of $12$-cycles and $14$-cycles, respectively.  By Nikkuni  ~\cite{Nik}, for any embedding of $C_{14}$ in $S^3$, the mod $2$ sum of the arf invariants of all $12$-cycles and $14$-cycles is $1$. So an odd number of cycles in $S_1 \cup S_2$ have arf invariant $1$.  Hence for precisely one $i$, the set $S_i$ has an odd number of cycles with arf invariant $1$. Since $|S_1|=56$ and $|S_2|=24$ are each even, $S_i$ must have an odd number of cycles with arf invariant $0$ and an odd number of cycles with arf invariant 1.

 We know that $h(S_i)=S_i$ and $h$ preserves arf invariants.  Hence $h$ setwise fixes $T_0$ the set of cycles in $S_i$ with arf invariant $0$ and $T_1$ the set of cycles in $S_i$ with arf invariant $1$.  Since $\mathrm{order}(h)$ is a power of $2$, and $|T_0|$ and  $|T_1|$ are each odd, $h$ setwise fixes at least one cycle in $T_0$ and at least one cycle in $T_1$.  Hence at least two $12$-cycles or at least two $14$-cycles are setwise fixed by $h$, and hence by $\alpha$.  It now follows from Lemma~\ref{noReflection} that if $\mathrm{order}(\alpha)=2$, then $\alpha$ fixes no vertices.
 \medskip

\noindent (3) Suppose that $\mathrm{order}(\alpha)$ is even and $\mathrm{order}(\alpha)\not =2,6$.  Recall that every even order automorphism of $C_{14}$ has order $2$, $4$, $6$ or $8$. Then by part (2), $\alpha$ setwise fixes a $12$-cycle or $14$-cycle. If $\alpha$ setwise fixes a $14$-cycle, then $\mathrm{order}(\alpha)=2$ since $\mathrm{order}(\alpha)$ is even and cannot be $14$.  Thus we suppose that $\alpha$ setwise fixes a $12$-cycle $Q$, and hence $\mathrm{order}(\alpha)\not=8$

Since $\mathrm{order}(\alpha)\not =2,6$, we must have $\mathrm{order}(\alpha)=4$. Without loss of generality we can assume that  $Q$ is the outer cycle in Figure~\ref{12cycle} and $\alpha|Q=(1,4,7, 10)(2,5,8,11)(3,6,9,12)$.  But this is impossible because $\alpha(\{v,w\})=\{v,w\}$, and hence $\alpha$ cannot take vertex $4$ (which is adjacent to $w$) to vertex $7$ (which is adjacent to neither $v$ nor $w$).  Thus $\mathrm{order}(\alpha)\not=4$. \end{proof}

\medskip

\begin{thm} \label{Conclusion} A non-trivial automorphism of $C_{14}$ is realizable if and only if it has order $2$, $3$, $6$ or $7$.\end{thm}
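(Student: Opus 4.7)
The proof naturally splits into necessity and sufficiency. For necessity most of the work is already done: Lemma~\ref{rigid}(3) restricts any realizable even-order automorphism to order $2$ or $6$, and the paper records that the nontrivial orders in $\mathrm{Aut}(C_{14})\cong\mathrm{PGL}(2,7)$ are $2, 3, 4, 6, 7, 8$. Together these rule out orders $4$ and $8$, while the odd nontrivial orders are automatically $3$ and $7$, leaving exactly $\{2,3,6,7\}$.

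For sufficiency the plan is to exhibit, for each $p\in\{2,3,6,7\}$, an embedding $\Gamma$ of $C_{14}$ in $S^3$ and an orientation preserving periodic homeomorphism $h$ of $S^3$ of order $p$ that preserves $\Gamma$ and induces an automorphism of order $p$. A useful preliminary reduction is that realizability is a conjugacy invariant of $\mathrm{Aut}(C_{14})$: if $\beta=\varphi\alpha\varphi^{-1}$ and $h$ realizes $\alpha$ on an embedding $e$, then the same $h$ realizes $\beta$ on the relabeled embedding $e\circ\varphi^{-1}$. Hence it is enough to realize one representative per conjugacy class of each target order. For order $7$ I would place a $14$-cycle of $C_{14}$ uniformly on a round circle invariant under the standard $2\pi/7$-rotation of $S^3$ and route the remaining seven edges as a single rotation-symmetric orbit of arcs. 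For order $3$, Lemma~\ref{order3or7}(2) supplies a natural template: put the two fixed vertices on the axis of a $2\pi/3$-rotation and draw the two invariant $12$-cycles as concentric $3$-fold symmetric curves around this axis. For order $2$, Lemma~\ref{rigid}(2) forces no fixed vertex, so the vertices split into seven $2$-orbits that I would place as pairs swapped by a half-turn, with edges routed equivariantly. For order $6$, I would combine the order $2$ and order $3$ pictures by placing a fixed pair on the axis and distributing the remaining twelve vertices in two orbits of size $6$.

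The main obstacle will be the order $6$ construction. There the $6$-fold rotation must induce an automorphism of \emph{exact} order $6$, not merely its square or cube, and the edge routing must remain equivariant while actually producing the Heawood adjacency pattern on $14$ vertices rather than some other $3$-regular bipartite graph. A secondary concern is that $\mathrm{PGL}(2,7)$ may have more than one conjugacy class of elements of the same target order (most plausibly for $p=2$, where split and non-split involutions could both appear); if so, a separate embedding will be needed for each such class. Once each explicit equivariant embedding is in hand, verifying that the induced map on the abstract graph has the advertised order is a routine check, and combining this with necessity completes the theorem.
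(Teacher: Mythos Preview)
Your necessity argument is exactly the paper's. For sufficiency the paper is more economical than your plan: it produces a single embedding admitting a glide rotation $h$ of order~$6$ (a $2\pi/3$ rotation about an axis through the two fixed vertices $v,w$ composed with a $\pi$ rotation about the circular waist), so that $h$, $h^{2}$, and $h^{3}$ realize orders $6$, $3$, and $2$ simultaneously; a separate embedding with a $2\pi/7$ rotational symmetry handles order~$7$. There is no need for four independent constructions.

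Your ``secondary concern'' about conjugacy classes is not secondary---it is fatal to the universal reading of the statement that your proposal implicitly adopts. In $\mathrm{Aut}(C_{14})\cong\mathrm{PGL}(2,7)$ there are indeed two conjugacy classes of involutions. The class lying in $\mathrm{PSL}(2,7)\cong\mathrm{GL}(3,2)$ preserves the point/line bipartition of the Heawood graph, and any such involution (a unipotent $I+N$ with $N^{2}=0$, $\operatorname{rank}N=1$ over $\mathbb{F}_{2}$) fixes exactly three points and three lines of the Fano plane, hence six vertices of $C_{14}$. By Lemma~\ref{rigid}(2) those involutions are \emph{not} realizable, so the statement ``every order~$2$ automorphism is realizable'' is false, and no embedding you build will realize that class. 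The paper's proof establishes (and its applications only require) the existential reading: the set of orders achieved by realizable nontrivial automorphisms is exactly $\{2,3,6,7\}$. For orders $3$, $6$, and $7$ there is a single conjugacy class in $\mathrm{PGL}(2,7)$, so your conjugacy-invariance reduction does recover the universal statement there; the discrepancy is confined to order~$2$.
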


\begin{proof} Figure~\ref{D6} illustrates an embedding of $C_{14}$ with vertices labeled as in Figure~\ref{12cycle} where vertex $w$ is at $\infty$ and the grey arrows are the edges incident to $w$. This embedding has a glide rotation $h$ obtained by rotating the picture by $\frac{2\pi}{3}$ around a vertical axis going through vertices $v$ and $w$ while rotating by $\pi$ around the circular waist of the picture. Then $h$ induces the order $6$ automorphism $(v,w)(10,11, 6, 7, 2, 3)(1,4, 9, 12, 5, 8)$.  Now $h^3$ and $h^2$ induce automorphisms of order $2$ and $3$ respectively. Thus automorphisms of orders $2$, $3$, and $6$ are realizable.
 
\begin{figure}[h!]
\begin{center}
\includegraphics[width=5cm]{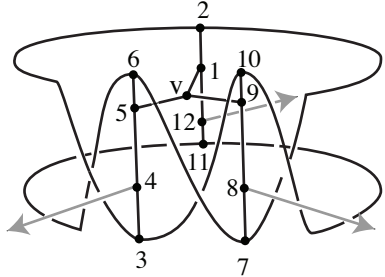}
\caption{This embedding has a glide rotation inducing $(v,w)(10,11, 6, 7, 2, 3)(1,4, 9, 12, 5, 8)$.}
\label{D6}
\end{center}
\end{figure}

Figure~\ref{Z7} shows an embedding of $C_{14}$ with a rotation of order $7$ about the center of the picture.  Thus $C_{14}$ has realizable automorphisms of order $2$, $3$, $6$, and $7$, as required.

\begin{figure}[h!]
\begin{center}
\includegraphics[width=4cm]{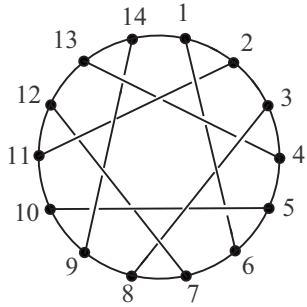}
\caption{This embedding has a rotation of order $7$.}
\label{Z7}
\end{center}
\end{figure}

For the converse, we know that that the only odd order automorphisms of $C_{14}$ have order $3$ or $7$, and part (3) of Lemma~\ref{rigid} shows that the only realizable even order automorphisms of $C_{14}$ have order $2$ or $6$.  \end{proof}\medskip

\medskip

\section{Topological symmetry groups of embeddings of $C_{14}$}

Since $C_{14}$ is intrinsically chiral  \cite{MR3181638}, for any embedding $\Gamma$ of $C_{14}$ in $S^3$, $\mathrm{TSG}(\Gamma)=\mathrm{TSG}_+(\Gamma)$.  Thus a finite group $G$ is realizable for $C_{14}$ if and only if $G$ is positively realizable.

 Let $\Gamma$ be an embedding of $C_{14}$ in $S^3$.  We know that $\mathrm{TSG}(\Gamma)$ is a subgroup of $\mathrm{Aut}(C_{14})\cong \mathrm{PGL}(2,7)$.  According to Cameron, Omidi, Tayfeh-Rezaie ~\cite{MR2240756}, the nontrivial proper subgroups of $\mathrm{PGL}(2,7)$ are $\mathbb{Z}_2$,  $\mathbb{Z}_3$, $\mathbb{Z}_4$, $\mathbb{Z}_6$, $\mathbb{Z}_7$, $\mathbb{Z}_8$,  $D_2$, $D_3$, $D_4$, $D_6$, $D_7$, $D_8$, $A_4$, $S_4$, $\mathrm{PSL}(2,7)$, $\mathbb{Z}_7 \rtimes \mathbb{Z}_3$, and $\mathbb{Z}_7 \rtimes \mathbb{Z}_6$.  We can eliminate the groups  $\mathbb{Z}_4$, $\mathbb{Z}_8$, $D_4$, $D_8$, $S_4$, $\mathrm{PSL}(2,7)$, and $\mathrm{PGL}(2,7)$ as possibilities for $\mathrm{TSG}(\Gamma)$ because we know from Theorem~\ref{Conclusion} that no realizable automorphism of $C_{14}$ has order $4$.  Thus the only groups that are possibilities for $\mathrm{TSG}(\Gamma)$ for some embedding $\Gamma$ of $C_{14}$ are the trivial group, $\mathbb{Z}_2$, $\mathbb{Z}_3$, $\mathbb{Z}_6$, $\mathbb{Z}_7$, $D_2$, $D_3$, $D_6$, $D_7$, $A_4$, $\mathbb{Z}_7 \rtimes \mathbb{Z}_3$, and $\mathbb{Z}_7 \rtimes \mathbb{Z}_6$. 

\medskip

\begin{thm} \label{classification} The trivial group and the groups $\mathbb{Z}_2$, $\mathbb{Z}_3$, $\mathbb{Z}_6$, $\mathbb{Z}_7$, $D_3$, and $D_7$ are realizable for $C_{14}$.  \end{thm}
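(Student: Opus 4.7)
The plan is to produce, for each group $G$ listed in the theorem, an explicit embedding $\Gamma_G$ of $C_{14}$ in $S^3$ whose topological symmetry group equals $G$ on the nose. Theorem~\ref{Conclusion} already gives embeddings whose TSGs \emph{contain} $\mathbb{Z}_6$ and $\mathbb{Z}_7$ (the embeddings of Figures~\ref{D6} and~\ref{Z7}), so most of the work consists of constructing the two dihedral examples from scratch and, in every case, upgrading ``contains'' to ``equals.''

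For the cyclic and trivial cases I would start from the embeddings of Figures~\ref{D6} and~\ref{Z7} and, where needed, tie a chiral knot (say a right-handed trefoil) into a single orbit of edges under the intended cyclic action. This preserves the $\mathbb{Z}_6$ or $\mathbb{Z}_7$ symmetry while topologically distinguishing that edge orbit from the others, ruling out any extra automorphism that would mix orbits. To realize $\mathbb{Z}_3$ and $\mathbb{Z}_2$ I would use modifications of the Figure~\ref{D6} embedding that retain only the order-$3$ (respectively order-$2$) subgroup of the glide rotation, and for the trivial group I would tie distinct asymmetric knots into enough edges to destroy every nontrivial automorphism.

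For $D_7$ and $D_3$ I would construct embeddings with the required dihedral symmetry directly. Using the bipartite structure of $C_{14}$ (whose color classes correspond to the ``points'' and ``lines'' of the Fano plane), I would place the two classes on two horizontal circles in $S^3$ at heights $\pm h$ with equal radii, drawing the $21$ edges symmetrically along the Fano incidences. The combination of a $\tfrac{2\pi}{7}$ rotation about the vertical axis with a $\pi$ rotation about a horizontal axis swapping the two circles yields a $D_7$ action in which the horizontal involution realizes the automorphism that inverts the order-$7$ rotation. For $D_3$ I would carry out an analogous $3$-fold construction, starting from a modification of Figure~\ref{D6} and adding an involution about a horizontal axis that conjugates the order-$3$ rotation to its inverse.

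The main obstacle will be verifying the equality $\mathrm{TSG}(\Gamma_G)=G$, which amounts to ruling out every proper overgroup of $G$ in the list of subgroups of $\mathrm{PGL}(2,7)$ recalled just before the theorem. By Theorem~\ref{Conclusion} the only candidates I need to worry about are those whose extra elements have orders in $\{2,3,6,7\}$: for the cyclic cases this leaves a subset of $\{D_n,\mathbb{Z}_7\rtimes\mathbb{Z}_3,\mathbb{Z}_7\rtimes\mathbb{Z}_6\}$, while for $D_3$ it leaves $D_6$ and $\mathbb{Z}_7\rtimes\mathbb{Z}_6$, and for $D_7$ it leaves only $\mathbb{Z}_7\rtimes\mathbb{Z}_6$. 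I would eliminate each candidate by combining the action of a hypothetical extra automorphism with the edge decorations, with the structure of finite-order homeomorphisms given by Lemma~\ref{rigid}(1), and with the orbit information on $12$- and $14$-cycles coming from Lemmas~\ref{C14Facts} and~\ref{order3or7}. I expect the most delicate step to be showing that the $D_7$ embedding does not acquire an extra order-$3$ automorphism that would promote its TSG to the Frobenius group $\mathbb{Z}_7\rtimes\mathbb{Z}_6$.
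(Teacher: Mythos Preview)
Your overall strategy---construct explicit embeddings and then pin down the topological symmetry group exactly---is the same as the paper's, but the paper executes it more economically in two respects.

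First, rather than building a separate embedding for each cyclic group and the trivial group, the paper invokes a prior result (Theorem~\ref{Subgroups}, from \cite{MR2931423}): if a $3$-connected graph has an embedding $\Gamma$ with $\mathrm{TSG}_+(\Gamma)=G$ and some edge is not pointwise fixed by any nontrivial element of $G$, then \emph{every} subgroup of $G$ is positively realizable.  Once the $D_7$ embedding is in hand and one checks that no edge is pointwise fixed, this single citation yields $\mathbb{Z}_7$, $\mathbb{Z}_2$, and the trivial group simultaneously, so your separate constructions for those cases are unnecessary.

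Second, the paper's mechanism for the upper bound is sharper than your plan of eliminating overgroups one by one from the subgroup list of $\mathrm{PGL}(2,7)$.  The idea is to decorate a single orbit of edges with identical local knots so that one specific $14$-cycle (respectively $6$-cycle) becomes the \emph{unique} cycle of that length carrying a prescribed knot pattern; any homeomorphism of the pair must then preserve that cycle setwise, forcing $\mathrm{TSG}(\Gamma)$ into the dihedral group of that cycle.  For the $D_7$ embedding this gives $\mathrm{TSG}(\Gamma)\le D_{14}$, and since $\mathrm{Aut}(C_{14})$ has no element of order $14$ the only possibility is $D_7$.  In particular your ``most delicate step'' evaporates: $|\mathbb{Z}_7\rtimes\mathbb{Z}_6|=42$ does not divide $|D_{14}|=28$, so the Frobenius group is excluded automatically.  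The same invariant-cycle device gives $\mathrm{TSG}\le D_3$ for the $D_3$ embedding and $\mathrm{TSG}\le D_6$ for the Figure~\ref{D6} embedding; for the latter the paper then forward-references Theorem~\ref{noD2} (that $D_6$ is not realizable) to conclude $\mathrm{TSG}=\mathbb{Z}_6$.  Finally, $\mathbb{Z}_3$ is obtained not by modifying Figure~\ref{D6} but by replacing the three trefoils in the $D_3$ embedding with identical non-invertible knots, which kills the order-$2$ flip while retaining the rotation.
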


In order to prove Theorem~\ref{classification}, we will use the following prior result.  

\begin{thm}\label{Subgroups}(\cite{MR2931423})  Let $\gamma$ be a $3$-connected graph embedded in $S^3$ as $\Gamma$ which has an edge $e$ that is not pointwise fixed by any non-trivial element of $G=\mathrm{TSG_+}(\Gamma)$.  Then every subgroup of $G$ is positively realizable for $\gamma$. 
\end{thm}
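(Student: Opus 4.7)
The plan is to combine three explicit symmetric embeddings with Theorem~\ref{Subgroups}. I will exhibit embeddings $\Gamma_1$, $\Gamma_2$, $\Gamma_3$ of $C_{14}$ in $S^3$ with $\mathbb{Z}_6 \subseteq \mathrm{TSG}_+(\Gamma_1)$, $D_3 \subseteq \mathrm{TSG}_+(\Gamma_2)$ and $D_7 \subseteq \mathrm{TSG}_+(\Gamma_3)$. Since the subgroup lattices of $\mathbb{Z}_6$, $D_3$ and $D_7$ together contain exactly the seven target groups $\{e\}$, $\mathbb{Z}_2$, $\mathbb{Z}_3$, $\mathbb{Z}_6$, $\mathbb{Z}_7$, $D_3$, $D_7$, once Theorem~\ref{Subgroups} is applicable to each $\Gamma_i$ it will produce the desired realizations. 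Intrinsic chirality of $C_{14}$ then identifies positive realizability with realizability.

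For $\Gamma_1$ I would reuse the glide-rotation embedding already drawn in Figure~\ref{D6}, which supports a $\mathbb{Z}_6$ action by construction in the proof of Theorem~\ref{Conclusion}. For $\Gamma_2$, I would construct an embedding that, in addition to the order-3 rotation of Lemma~\ref{order3or7}(2) (fixing two non-adjacent vertices $v$, $w$ and rotating two invariant $12$-cycles by $\pm\tfrac{2\pi}{3}$), admits an orientation-preserving $\pi$-rotation about a perpendicular axis that interchanges $v$ with $w$ and swaps the two invariant $12$-cycles; together these rotations generate $D_3$. For $\Gamma_3$, I would draw $C_{14}$ as two nested $7$-gons in $\mathbb{R}^3 \subset S^3$ supporting both the $7$-fold rotation of Figure~\ref{Z7} and an orientation-preserving $\pi$-rotation about a perpendicular axis that swaps the two bipartition classes; these generate $D_7$.

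For each $\Gamma_i$ the edge hypothesis of Theorem~\ref{Subgroups} will hold automatically. Every non-trivial element of $\mathrm{TSG}_+(\Gamma_i)$ is a realizable automorphism of $C_{14}$, so by Theorem~\ref{Conclusion} has order $2$, $3$, $6$ or $7$. By Lemma~\ref{rigid}(2) an order-2 element fixes no vertex; by Lemma~\ref{order3or7}(1) an order-7 element rotates a Hamiltonian $14$-cycle non-trivially and hence fixes no vertex; by Lemma~\ref{order3or7}(2) an order-3 element fixes only the two non-adjacent vertices $v$, $w$; and an order-6 element cubes to a fixed-vertex-free involution, so itself has no fixed vertex. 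Consequently no edge of $\Gamma_i$ can be pointwise fixed by any non-trivial element of $\mathrm{TSG}_+(\Gamma_i)$, and any edge may serve as the edge $e$ required in Theorem~\ref{Subgroups}. Applying that theorem to each $\Gamma_i$ realizes every subgroup of $\mathrm{TSG}_+(\Gamma_i)$, in particular each of the seven listed groups.

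The main obstacle is the rigorous construction of $\Gamma_2$ and $\Gamma_3$: I must exhibit explicit vertex positions and edge arcs in $S^3$ on which the additional $\pi$-rotation truly acts as an orientation-preserving self-homeomorphism carrying the embedded graph to itself. In each case this reduces to a finite combinatorial check that the proposed involution of the vertex set extends to an automorphism of $C_{14}$, guided by the description of the invariant $12$-cycles from Lemma~\ref{order3or7}(2) for $\Gamma_2$ and by the canonical bipartition of $C_{14}$ for $\Gamma_3$.
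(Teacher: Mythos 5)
Your proposal does not prove the statement at hand. Theorem~\ref{Subgroups} is a general result about an arbitrary $3$-connected graph $\gamma$: given \emph{any} embedding $\Gamma$ with an edge $e$ not pointwise fixed by any non-trivial element of $G=\mathrm{TSG}_+(\Gamma)$, every subgroup of $G$ is positively realizable for $\gamma$. What you have written is instead a sketch of the realizability results for the specific graph $C_{14}$ (essentially Theorem~\ref{classification}), and --- fatally --- it \emph{invokes} Theorem~\ref{Subgroups} as its main tool. You cannot prove a statement by applying that same statement to three examples; the argument is circular, and in any case it establishes nothing about graphs other than $C_{14}$, whereas the theorem quantifies over all $3$-connected graphs and all subgroups of an arbitrary $\mathrm{TSG}_+(\Gamma)$.

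For the record, the paper does not prove Theorem~\ref{Subgroups} either: it is imported from \cite{MR2931423} (the ``Edge Embedding / subgroup'' machinery of Flapan, Mellor, Naimi, Yoshizawa), and a genuine proof requires ideas not present in your sketch --- roughly, one re-embeds $\Gamma$ so that $G$ is induced by a finite group of diffeomorphisms of $S^3$, and then, for a chosen subgroup $H\leq G$, modifies the embedding along the orbit of the edge $e$ (e.g.\ by tying distinct knots into the edges of distinct $H$-orbits) so as to destroy all symmetries outside $H$ while preserving those in $H$; the hypothesis that no non-trivial element of $G$ pointwise fixes $e$ is what guarantees the orbit bookkeeping works. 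None of this appears in your proposal. Separately, even read as a proof of Theorem~\ref{classification}, your sketch leaves the key containments $\mathrm{TSG}_+(\Gamma_i)\leq \mathbb{Z}_6, D_3, D_7$ unaddressed (you only argue the groups are at least this large, so you could a priori realize a larger group whose subgroup lattice you have not controlled), which the paper handles by inserting knots to pin down the symmetry group exactly.
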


\begin{proof}[Proof of Theorem~\ref{classification}] We begin with the embedding $\Gamma$ of $C_{14}$ illustrated in Figure~\ref{D7} where the grey squares represent the same trefoil knot.  
\begin{figure}[h!]
\begin{center}
\includegraphics[width=4cm]{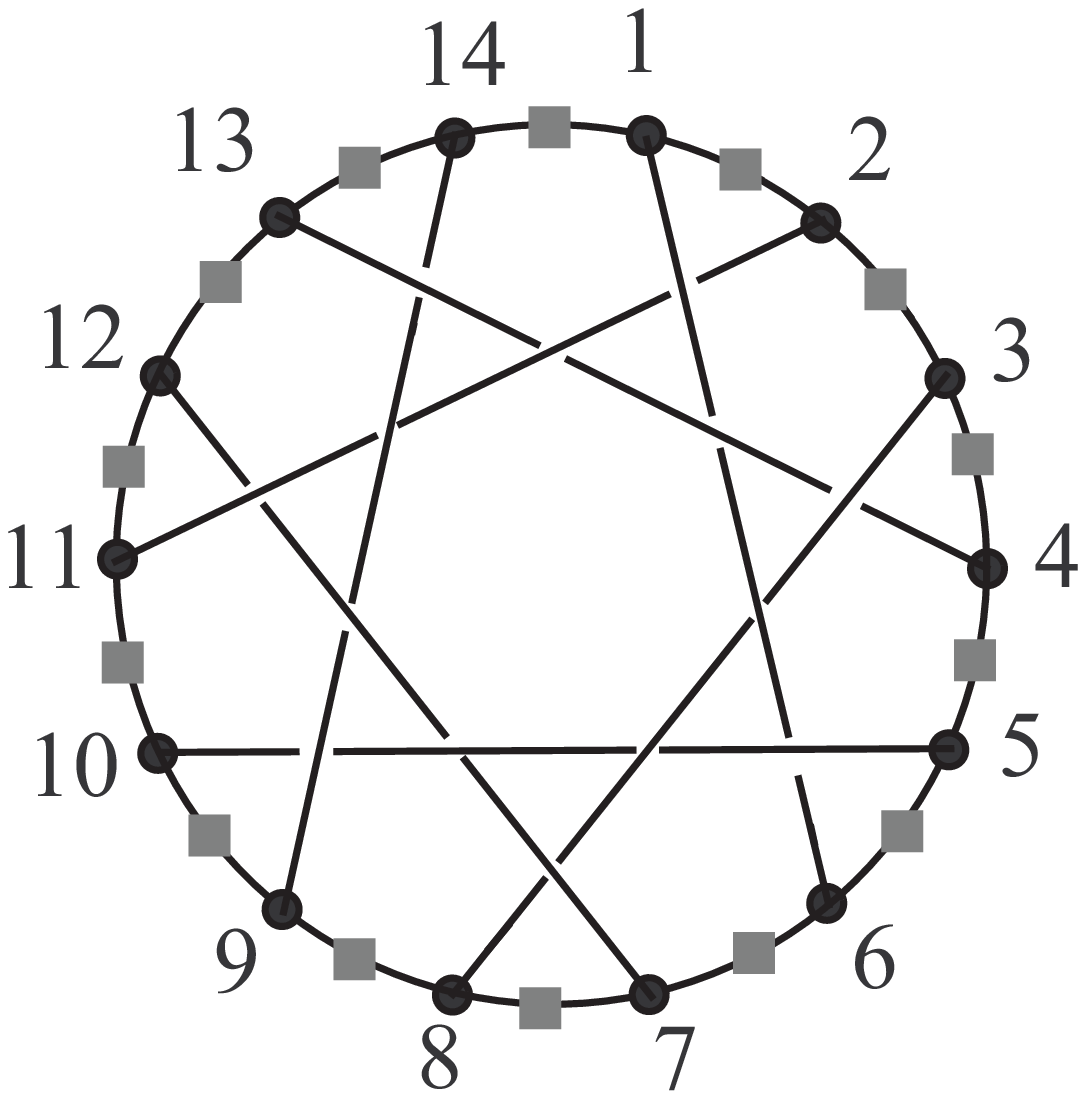}
\caption{$\mathrm{TSG}(\Gamma)=D_7$.}
\label{D7}
\end{center}
\end{figure}
The outer circle $C$ is setwise invariant under any homeomorphism of $(S^3,\Gamma)$ because $C$ is the only $14$-cycle with $14$ trefoil knots, and by \cite{MR2931423} any such homeomorphism must preserve the set of knotted edges.  It follows that  $\mathrm{TSG}(\Gamma)\leq D_{14}$.  Also, $\Gamma$ is invariant under a rotation by $\frac{2\pi}{7}$ inducing the automorphism $(1,3,5,7,9,11,13)(2,4,6,8, 10,12)$ and a homeomorphism turning $C$ over inducing $(1, 14)(2, 13)(3,12) (4, 11)(5,10)(6, 9)( 7, 8)$.  Thus $D_7\leq \mathrm{TSG}(\Gamma)$.  But $D_7$ is the only subgroup of $D_{14}$ containing $D_7$ which has no element of order $14$.  Thus $\mathrm{TSG}(\Gamma)=D_7$.

Observe that no edge of $\Gamma$ is pointwise fixed by any non-trivial element of $\mathrm{TSG}(\Gamma)=\mathrm{TSG}_+(\Gamma)$.  Hence by Theorem~\ref{Subgroups}, every subgroup of $\mathrm{TSG}(\Gamma)$ is realizable.  In particular, the groups $D_7$, $\mathbb{Z}_7$, $\mathbb{Z}_2$, and the trivial group are each realizable for $C_{14}$.

In the embedding $\Gamma'$ illustrated in Figure~\ref{D3}, $v$ is above the plane of projection, $w$ is below the plane, and the three grey squares represent the same trefoil knot.  Now $C=\overline{1, 12, 5,4, 9,8}$ is the only $6$-cycle containing three trefoil knots.  It follows that any homeomorphism of $(S^3,\Gamma')$ must take $C$ to itself taking the set of three trefoils to itself.    Thus $\mathrm{TSG}(\Gamma')\leq D_{3}$.  Since $\Gamma'$ is invariant under a $\frac{2\pi}{3}$ rotation as well as under turning the picture over, $\mathrm{TSG}(\Gamma')=D_3$.  Now if we replace the three trefoils on $C$ by three identical non-invertible knots, we will get an embedding $\Gamma''$ such that  $\mathrm{TSG}(\Gamma'')=\mathbb{Z}_3$.

\begin{figure}[h!]
\begin{center}
\includegraphics[width=5cm]{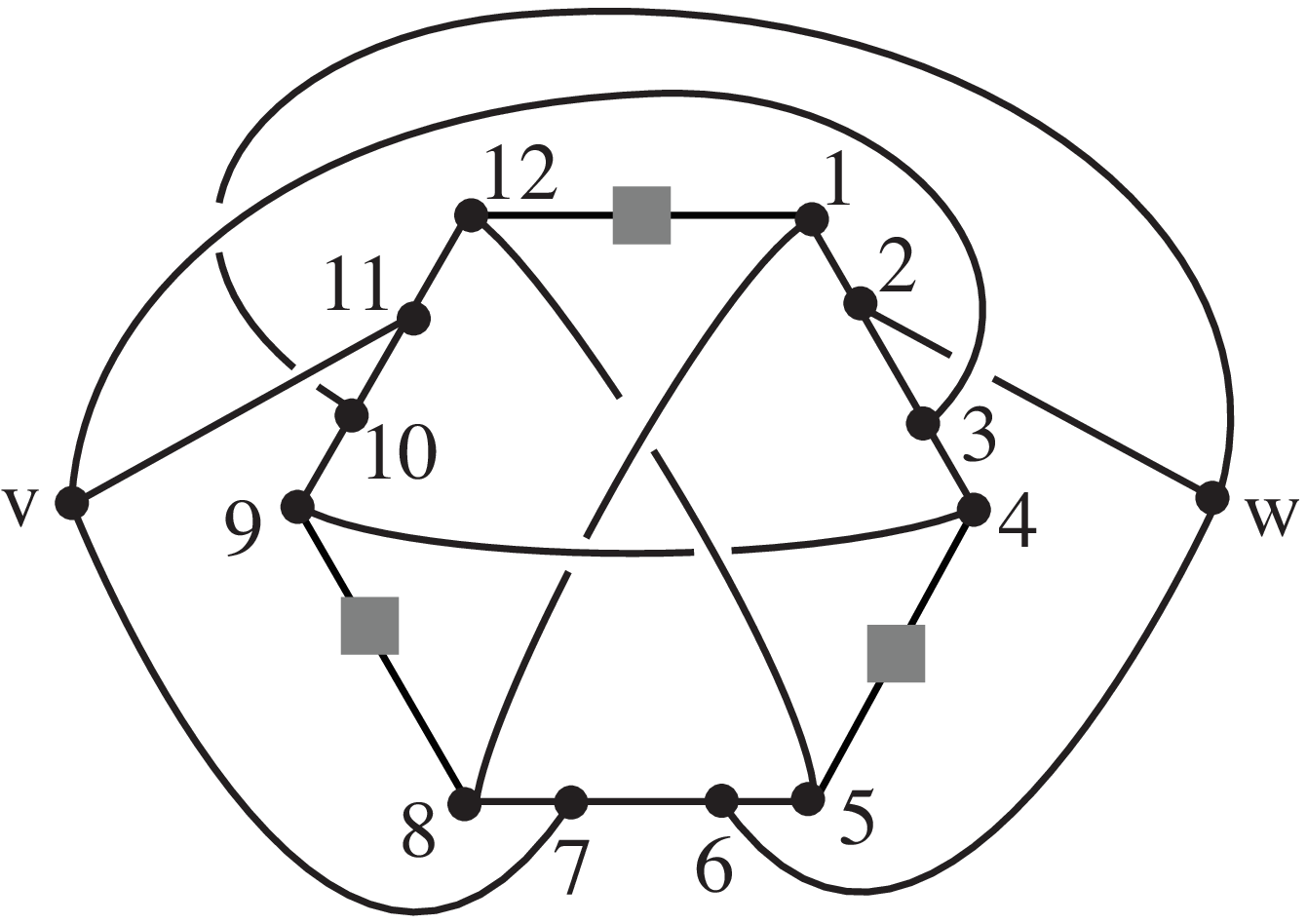}
\caption{$\mathrm{TSG}(\Gamma')=D_3$ and $\mathrm{TSG}(\Gamma'')=\mathbb{Z}_3$.}
\label{D3}
\end{center}
\end{figure}

Finally, let $\Lambda$ be the embedding in Figure~\ref{D6}.  Then the $6$-cycle $C=\overline{10,11,6,7,2,3}$ is the only $6$-cycle which contains a trefoil knot.  Thus $C$ is setwise invariant under any homeomorphism of $(S^3, \Lambda)$.  Hence $\mathrm{TSG}(\Lambda)\leq D_{6}$. We also saw in Figure~\ref{D6} that a glide rotation of $S^3$ induces the order $6$ automorphism $(v,w)(10,11,6,7,2,3)(1,4,9,12,5,8)$.  Thus $\mathbb{Z}_6\leq \mathrm{TSG}(\Lambda)$.  Since we know from Theorem~\ref{noD2} that $D_6$ is not realizable for $C_{14}$, it follows that $\mathrm{TSG}(\Lambda)=\mathbb{Z}_{6}$.\end{proof}

\medskip

In what follows, we prove that no other groups are realizable for $C_{14}$.

\begin{thm}\label{noD2}  The groups $D_2$ and $D_6$ are not realizable for $C_{14}$.  \end{thm}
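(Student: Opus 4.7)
The plan is to reduce both cases to a single vertex-orbit count. Observe first that $D_6$ contains $\mathbb{Z}_2\times\mathbb{Z}_2\cong D_2$ as a subgroup: the unique order-$2$ rotation in $D_6$ is central, so together with any reflection it generates a Klein four-group. Consequently, whether $\mathrm{TSG}(\Gamma)$ equals $D_2$ or $D_6$, we would have $D_2\leq \mathrm{TSG}(\Gamma)$. It therefore suffices to rule out $D_2\leq \mathrm{TSG}(\Gamma)$ for every embedding $\Gamma$ of $C_{14}$ in $S^3$.

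Suppose for contradiction that some embedding $\Gamma$ has $\mathrm{TSG}(\Gamma)$ containing commuting involutions $\alpha,\beta$ generating a copy of $D_2$. Then each of $\alpha$, $\beta$, and $\alpha\beta$ lies in $\mathrm{TSG}(\Gamma)$ and is therefore a realizable order-$2$ automorphism of $C_{14}$. By Lemma~\ref{rigid}(2), none of these three involutions fixes any vertex of $C_{14}$.

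Consequently, under the action of $\langle \alpha,\beta\rangle$ on the $14$ vertices of $C_{14}$, every vertex stabilizer is trivial, so every orbit has size $|\langle\alpha,\beta\rangle|=4$. This forces $4\mid 14$, a contradiction. Hence no such $\Gamma$ exists, and both $D_2$ and $D_6$ are ruled out.

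The main (really the only) conceptual step is spotting this reduction: once one notices that $D_6$ hides a Klein four-group in its order-$2$ center times a reflection, the job is done by Lemma~\ref{rigid}(2) plus the divisibility fact $4\nmid 14$. No further topology or Smith-theoretic input is required beyond what was already used in Section 3, so I expect the only subtle point to be writing the $D_6\supseteq D_2$ observation cleanly enough that both exclusions follow from one unified argument.
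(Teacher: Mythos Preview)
Your proof is correct and reaches the same contradiction as the paper---one of the three involutions in $D_2$ would have to fix a vertex, contradicting Lemma~\ref{rigid}(2)---but you arrive there by a different and arguably cleaner mechanism. The paper argues via edges: since $C_{14}$ has $21$ edges, each involution setwise fixes an odd number of edges; commutativity then forces $\beta$ to preserve the (odd-cardinality) set $E_\alpha$ of $\alpha$-invariant edges, so some edge $e$ is invariant under both $\alpha$ and $\beta$, and hence one of $\alpha$, $\beta$, $\alpha\beta$ fixes an endpoint of $e$. You bypass the edge argument entirely and apply orbit--stabilizer directly to the vertex set: if all three nontrivial elements of $D_2$ are fixed-point free on vertices, every vertex orbit has size $4$, contradicting $4\nmid 14$. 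Both arguments rest on the same key input (Lemma~\ref{rigid}(2)); yours is shorter, while the paper's edge-parity trick has the minor advantage of locating a specific fixed vertex rather than merely asserting one exists.
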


\begin{proof} Suppose that there exist realizable order $2$ automorphisms $\alpha$ and $\beta$ of $C_{14}$ such that $\langle \alpha, \beta\rangle=D_2$. 
 Since $C_{14}$ has 21 edges, $\alpha$ and $\beta$ each setwise fix an odd number of edges.  Let $E_\alpha$ denote the set of edges which are invariant under $\alpha$.  Let $\varepsilon\in E_\alpha$.  Then $\alpha(\beta(\varepsilon))=\beta(\alpha(\varepsilon))=\beta(\varepsilon)$.  Thus $\beta(\varepsilon)\in E_\alpha$.  It follows that $\beta(E_\alpha)=E_\alpha$.  However, since $E_\alpha$ has an odd number of elements and $\beta$ has order $2$, there is some edge $e\in E_\alpha$ such that $\beta(e)=e$.  Thus $\alpha$ and $\beta$ both setwise fix the edge $e$, and hence at least one of the involutions $\alpha$, $\beta$, or $\alpha\beta$ must pointwise fix $e$.  
 
 Now by Lemma~\ref{noReflection}, none of  $\alpha$, $\beta$, or $\alpha\beta$ can fix any vertex.   Thus $D_2$ is not realizable.  But since $D_6$ contains involutions $\alpha$ and $\beta$ such that $\langle \alpha, \beta\rangle=D_2$, it follows that $D_6$ also cannot be realizable for $C_{14}$. \end{proof}

\medskip

\begin{thm}\label{noA4}  The group $A_4$ is not realizable for $C_{14}$.  \end{thm}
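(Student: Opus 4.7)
The plan is to reduce the problem to Theorem~\ref{noD2}. Recall that $A_4$ contains a unique normal subgroup of order $4$, namely the Klein four-group $V_4 \cong D_2$ consisting of the identity and three pairwise commuting involutions $\alpha$, $\beta$, $\alpha\beta$. If some embedding $\Gamma$ satisfied $\mathrm{TSG}(\Gamma)=A_4$, then every element of $A_4$ would be induced by a homeomorphism of $(S^3,\Gamma)$. In particular, each of $\alpha$, $\beta$, and $\alpha\beta$ would be a realizable order-$2$ automorphism of $C_{14}$, and all three would be induced on the same embedding $\Gamma$.

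With three simultaneously realizable commuting involutions in hand, I would rerun the parity argument from the proof of Theorem~\ref{noD2}. First, Lemma~\ref{rigid}(2) shows that each of $\alpha$, $\beta$, and $\alpha\beta$ fixes no vertex of $C_{14}$. Next, because $C_{14}$ has $21$ edges, the set $E_\alpha$ of edges setwise fixed by $\alpha$ has odd cardinality; and because $\beta$ commutes with $\alpha$, $\beta$ permutes $E_\alpha$ as an involution, so some edge $e\in E_\alpha$ is fixed by $\beta$. Then $\alpha$ and $\beta$ both setwise fix $e$, which forces at least one of $\alpha$, $\beta$, $\alpha\beta$ to pointwise fix $e$ and in particular to fix both endpoints of $e$.

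This contradicts the vertex-freeness supplied by Lemma~\ref{rigid}(2), so no embedding of $C_{14}$ can have $\mathrm{TSG}(\Gamma)=A_4$. The only conceptual step is the observation that once $A_4$ is the full topological symmetry group, its $V_4$ subgroup produces exactly the hypothesis used in Theorem~\ref{noD2}; after that the argument is a verbatim rerun. There is no real obstacle beyond correctly identifying the Klein four-subgroup inside $A_4$ and noting that the three involutions there are simultaneously induced on $\Gamma$.
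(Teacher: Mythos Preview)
Your argument is correct, but it is genuinely different from the paper's. The paper proves Theorem~\ref{noA4} by an orbit-counting (Burnside) computation: using that each of the eight order-$3$ elements of $A_4$ fixes exactly two vertices (Lemma~\ref{order3or7}(2)) and each of the three involutions fixes none (Lemma~\ref{rigid}(2)), the number of vertex orbits would be $\frac{1}{12}(8\cdot 2 + 3\cdot 0 + 14)=\frac{30}{12}$, which is not an integer.

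Your route instead exploits the Klein four-subgroup $V_4\leq A_4$ and reruns the edge-parity argument from Theorem~\ref{noD2}. This is a legitimate reduction: once $A_4=\mathrm{TSG}(\Gamma)$, all three involutions of $V_4$ are realizable on the same embedding, so Lemma~\ref{rigid}(2) applies to each and the contradiction follows exactly as in the $D_2$ case. In effect you are observing that the proof of Theorem~\ref{noD2} already excludes every topological symmetry group containing a copy of $D_2$; this single observation disposes of $D_2$, $D_6$, and $A_4$ simultaneously. The paper's Burnside computation, by contrast, is self-contained and uses the order-$3$ fixed-point count rather than the $D_2$ obstruction, but is specific to $A_4$. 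Your approach is arguably cleaner and could have been folded into Theorem~\ref{noD2} as a corollary.
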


\begin{proof}  Suppose that $\Gamma$ is an embedding of $C_{14}$ such that $\mathrm{TSG}(\Gamma)=A_4$.  According to Burnside's Lemma \cite{MR0069818}, the number of vertex orbits of $\Gamma$ under $\mathrm{TSG}(\Gamma)$ is:

$$\frac{1}{|A_4|}\sum_{\alpha\in A_4}|\mathrm{fix}(\alpha)|$$

\noindent where $|\mathrm{fix}(\alpha)|$ denotes the number of vertices fixed by an automorphism $\alpha\in \mathrm{TSG}(\Gamma)$.  Observe that $A_4$ contains eight elements of order $3$, three elements of order $2$, and no other non-trivial elements.  Now by part (2) of Lemma~\ref{order3or7}, each order $3$ automorphism fixes precisely two vertices, and by Lemma~\ref{rigid} part (2), no realizable order $2$ automorphism fixes any vertex. Thus the number of vertex orbits of $\Gamma$ under $\mathrm{TSG}(\Gamma)$ is:

$$\frac{1}{|A_4|}\sum_{\alpha\in A_4}|\mathrm{fix}(\alpha)|=\frac{1}{12}((8\cdot 2)+(3\cdot 0)+(1\cdot 14))=\frac{30}{12}.$$

\noindent As this is not an integer, $A_4$ cannot be realizable for $C_{14}$.  \end{proof}

\medskip

In order to show that the groups $\mathbb{Z}_7 \rtimes \mathbb{Z}_3$ and $\mathbb{Z}_7 \rtimes \mathbb{Z}_6$ are not realizable for $C_{14}$, we will make use of the definition and prior results below.

\begin{definition}  A finite group $G$ of orientation preserving diffeomorphisms of $S^3$ is said to satisfy the \emph{involution condition} if for every involution $g\in G$, we have $\mathrm{fix}(g)\cong S^1$ and no $h\in G$ with $h\not=g$ has $\mathrm{fix}(h)=\mathrm{fix}(g)$.
\end{definition}

\begin{thm} \cite{TSG1} \label{reembed}Let $\Gamma$ be a $3$-connected graph embedded in $S^3$ with $H=\mathrm{TSG}_+(\Gamma)$.  Then $\Gamma$ can be re-embedded in $S^3$ as $\Delta$ such that $H\leq \mathrm{TSG}_+(\Delta)$ and $H$ is induced by an isomorphic finite group of orientation preserving diffeomorphisms of $S^3$. 
\end{thm}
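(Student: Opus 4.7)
The plan is to upgrade the single-element rigidity used in Lemma~\ref{rigid}(1) to a group-wide equivariant statement. I want to realize $H$ as an honest finite group $G$ of orientation preserving diffeomorphisms of $S^3$ and then re-embed $\Gamma$ so that this $G$-action on $S^3$ restricts to the $H$-action on the graph. The key ingredients are the rigidity theorem of \cite{MR1347360} (which handles single automorphisms of a $3$-connected graph), the resolution of the Smith conjecture and its generalizations for finite group actions on $S^3$ (which force any finite subgroup of $\mathrm{Diff}^+(S^3)$ to be smoothly conjugate to a subgroup of $SO(4)$), and Smith theory controlling the fixed point sets of individual elements.

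First I would locate a candidate subgroup $G \leq SO(4)$ with $G \cong H$. Since each $h \in H$ is individually induced by some finite order orientation preserving homeomorphism of a re-embedding of $\Gamma$, with order equal to $\mathrm{order}(h)$, and any such homeomorphism is smoothly conjugate to a rotation of $S^3$, one can assemble the required subgroup of $SO(4)$ by using the isomorphism type of $H$ together with the classification of finite subgroups of $SO(4)$. Next I would build the re-embedding $\Delta$ equivariantly: fix a fundamental domain $F \subseteq S^3$ for the $G$-action and a set of orbit representatives for the vertices and edges of $\Gamma$ under the $H$-action. Each vertex $v$ with stabilizer $H_v \leq H$ must be placed in $\mathrm{fix}(G_v) \subseteq S^3$, where $G_v \leq G$ is the subgroup matching $H_v$; by Smith theory this fixed set is either a pair of points or a circle, so a consistent placement exists. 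Edges between orbit representatives are then routed inside $F$ so as to avoid their non-trivial $G$-translates, and the full embedding $\Delta$ is obtained by applying $G$ to this fundamental piece. By construction $G$ acts by diffeomorphisms on $(S^3, \Delta)$ and induces the original $H$-action on $\Gamma$, so $H \leq \mathrm{TSG}_+(\Delta)$.

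The main obstacle will be controlling the edge routing. Vertex placement is essentially forced by fixed-point considerations and is therefore rigid, but each edge must be positioned so that it meets its non-trivial $G$-translates only as required by its own stabilizer; unintended intersections would destroy the embedding. The crucial input here is the $3$-connectedness of $\Gamma$, which (exactly as in \cite{MR1347360}) provides enough flexibility to reroute each edge through small neighborhoods of its endpoints, allowing the local adjustments needed to make the equivariant embedding actually embedded. Once that routing is carried out, the resulting $\Delta$ satisfies the conclusion of the theorem.
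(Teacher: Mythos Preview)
This theorem is not proved in the present paper at all: it is quoted verbatim from \cite{TSG1} and used as a black box in the proof of Theorem~\ref{nosemidirect}. So there is no ``paper's own proof'' to compare against here.

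That said, your sketch has a genuine circularity in its first step. You propose to locate $G\leq SO(4)$ with $G\cong H$ by noting that each individual $h\in H$ is realized by a finite order rotation and then ``assembling'' these into a subgroup. Knowing that each element of $H$ is individually conjugate into $SO(4)$ does not produce a \emph{simultaneous} conjugation of all of $H$ into $SO(4)$; the existence of a finite diffeomorphism group of $S^3$ isomorphic to $H$ and inducing the given action on the graph is precisely the content of the theorem, not an input to it. Nothing in the classification of finite subgroups of $SO(4)$ tells you, a priori, that an arbitrary $\mathrm{TSG}_+(\Gamma)$ embeds there.

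The actual argument in \cite{TSG1} proceeds quite differently: one works with the complement $M=S^3\setminus N(\Gamma)$, uses $3$-connectedness to ensure that automorphisms of $\Gamma$ extend to the pared manifold structure, takes the characteristic (JSJ) decomposition of $M$, and applies Thurston's geometrization for Haken manifolds to obtain a geometric structure on each piece preserved by the induced $H$-action. The finite diffeomorphism group is then built piece by piece on this decomposition and glued along the characteristic tori, after which $\Gamma$ is re-embedded compatibly. The role of $3$-connectedness is to control the pared structure and the mapping class group of the boundary, not to reroute edges inside a fundamental domain as you suggest.
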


\begin{thm}  \cite{TSG2}  \label{SO4} Let $G$ be a finite group of orientation preserving isometries of $S^3$ which satisfies the involution condition.
\begin{enumerate}
\item If $G$ preserves a standard Hopf fibration of $S^3$, then $G$ is cyclic, dihedral, or a subgroup of $D_m\times D_m$ for some odd $m$.

\item If $G$ does not preserve a standard Hopf fibration of $S^3$, then $G$ is $S_4$, $A_4$, or $A_5$.
\end{enumerate}
\end{thm}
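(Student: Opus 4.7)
The plan is to invoke the classical structure theorem $SO(4)\cong (S^3\times S^3)/\{\pm(1,1)\}$, via the double cover $(q_L,q_R)\mapsto (x\mapsto q_L x q_R^{-1})$ on the unit quaternions $S^3$, and combine this with the classification of finite subgroups of $S^3$. A finite group $G\leq SO(4)$ lifts to a finite $\tilde G\leq S^3\times S^3$ whose projections $G_L,G_R$ to the two factors are themselves finite subgroups of $S^3$; by the classical list each of $G_L,G_R$ is a cyclic group $C_n$, a binary dihedral group $2D_n$, or one of the binary polyhedral groups $2T$, $2O$, $2I$.

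Next I would characterize the two conditions in the statement. The standard left Hopf fibration is the orbit decomposition of $S^3$ under left multiplication by the circle $S^1$ of unit complex quaternions; a direct check shows that $(q_L,q_R)$ permutes its fibers exactly when $q_L$ normalizes $S^1\subset S^3$, i.e.\ $q_L\in \mathrm{Pin}(2)=S^1\cup j\,S^1$, with no constraint on $q_R$, and symmetrically for the right fibration. Hence $G$ preserves some standard Hopf fibration iff one of $G_L,G_R$ lies in $\mathrm{Pin}(2)$, equivalently iff one of the projections is cyclic or binary dihedral. For the involution translation, an element of $SO(4)$ is an involution with $\mathrm{fix}\cong S^1$ iff its eigenvalues are $(1,1,-1,-1)$; in the $(q_L,q_R)$ parametrization this forces $q_L^2=q_R^2=-1$, with fixed circle $\{x\in S^3:q_L x=xq_R\}$.

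The classification then follows by case analysis along the Hopf dichotomy, with the uniqueness clause of the involution condition doing the work of eliminating candidates. In case (1) one projection, say $G_L$, lies in $\mathrm{Pin}(2)$; pushing the involution constraints through shows either $G_R$ also lies in $\mathrm{Pin}(2)$, yielding $G\leq D_m\times D_m$ after quotienting by $\{\pm(1,1)\}$, or else $G$ collapses to a cyclic or dihedral group. The parity restriction $m$ odd comes from the observation that in an even $D_m$ two distinct involutions would share the same fixed circle, violating uniqueness. In case (2) neither projection is in $\mathrm{Pin}(2)$, so $G_L,G_R\in\{2T,2O,2I\}$; the uniqueness clause forces $\tilde G$ to be essentially diagonal with $G_L\cong G_R$, giving $G\cong 2T/\{\pm 1\}\cong A_4$, $2O/\{\pm 1\}\cong S_4$, or $2I/\{\pm 1\}\cong A_5$.

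The main obstacle is the involution-condition bookkeeping, especially in case (1): one must keep careful track of the central $\{\pm(1,1)\}$ ambiguity and verify that two distinct lifted involutions in $\mathrm{Pin}(2)\times \mathrm{Pin}(2)$ produce distinct fixed circles precisely when the resulting quotient embeds in $D_m\times D_m$ with $m$ odd. The diagonal conclusion in case (2) similarly requires showing that any off-diagonal element would produce redundant involutions among the many order-$4$ elements of the binary polyhedral groups, again conflicting with uniqueness.
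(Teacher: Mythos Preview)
This theorem is not proved in the paper at all: it is quoted verbatim from \cite{TSG2} as a black-box tool and then applied in the proof of Theorem~\ref{nosemidirect}. So there is no ``paper's own proof'' to compare your attempt against.

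That said, your sketch is the standard route to this result and is essentially what the cited reference does: lift $G\leq SO(4)$ through the double cover $S^3\times S^3\to SO(4)$, analyze the two projections via the classification of finite subgroups of $S^3$ (cyclic, binary dihedral, $2T$, $2O$, $2I$), and use a Goursat-type argument. The identification of ``preserves a Hopf fibration'' with one projection landing in $\mathrm{Pin}(2)$ is correct, as is the eigenvalue description of involutions with circle fixed set. Your outline is honest about where the real work lies: the involution bookkeeping in case~(1) that forces $m$ odd, and the diagonality argument in case~(2). Those steps are genuinely delicate and your proposal does not actually carry them out, so as written it is a plan rather than a proof; but the plan is the right one and matches the approach of the original source.
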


\medskip

\begin{thm}\label{nosemidirect}   The groups $\mathbb{Z}_7 \rtimes \mathbb{Z}_3$ and $\mathbb{Z}_7 \rtimes \mathbb{Z}_6$ are not realizable for $C_{14}$.  \end{thm}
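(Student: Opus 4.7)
The plan is to reduce both claims to showing that the nonabelian Frobenius group $H\cong \mathbb{Z}_7\rtimes \mathbb{Z}_3$ of order $21$ cannot act faithfully on $S^3$ by orientation preserving isometries, and then to derive this from Theorem~\ref{SO4}. First, I would observe that each of the listed groups contains a copy of $H$: inside $\mathrm{PGL}(2,7)$ the Sylow-$7$ normalizer is $\mathbb{Z}_7\rtimes \mathbb{Z}_6$ with faithful $\mathbb{Z}_6$-action on $\mathbb{Z}_7$, so its subgroup generated by the $\mathbb{Z}_7$ and the unique $\mathbb{Z}_3\le \mathbb{Z}_6$ is the nonabelian group $H$ (indeed, the excerpt already notes that $\mathrm{PGL}(2,7)$ has no elements of order $21$, so the $\mathbb{Z}_3$ cannot act trivially on $\mathbb{Z}_7$). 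Hence if either $\mathbb{Z}_7\rtimes \mathbb{Z}_3$ or $\mathbb{Z}_7\rtimes \mathbb{Z}_6$ is realized as $\mathrm{TSG}_+(\Gamma)$ for some embedding $\Gamma$ of $C_{14}$, then $H$ is induced on $\Gamma$ by orientation preserving homeomorphisms of $(S^3,\Gamma)$.

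Next, I would apply Theorem~\ref{reembed} to produce a re-embedding $\Delta$ of $C_{14}$ for which the realized group, and in particular its subgroup $H$, is induced by an isomorphic finite group of orientation preserving diffeomorphisms of $S^3$. By the resolution of the Smith conjecture (via geometrization), such a finite group is conjugate to a subgroup of $\mathrm{SO}(4)$, so we may assume $H$ acts on $S^3$ as a group of orientation preserving isometries. Because $|H|=21$ is odd, $H$ contains no involutions, and so the involution condition of Theorem~\ref{SO4} is vacuously satisfied.

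I would then apply Theorem~\ref{SO4} to $H$ and eliminate each listed possibility in turn. The group $H$ is nonabelian, so it is not cyclic; it has odd order, so it is not dihedral; and $|H|=21\notin\{12,24,60\}=\{|A_4|,|S_4|,|A_5|\}$, so it is none of those. The remaining case $H\le D_m\times D_m$ for some odd $m$ is the one that requires a brief argument: each coordinate projection $p_i(H)\le D_m$ has order dividing $21$, hence odd, and since every subgroup of a dihedral group is either cyclic or dihedral and every dihedral group has even order, each $p_i(H)$ must be cyclic. Thus $H\le p_1(H)\times p_2(H)$ embeds in an abelian group, contradicting the nonabelianness of $H$.

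I expect the group-theoretic elimination in the last paragraph to be essentially straightforward; the subtler preliminary step is passing from the orientation preserving diffeomorphisms supplied by Theorem~\ref{reembed} to the orientation preserving isometries required by Theorem~\ref{SO4}, which invokes the Smith conjecture / geometrization but is by now standard in the topological symmetry group literature. Since this contradiction rules out $H$ acting as orientation preserving isometries, neither $\mathbb{Z}_7\rtimes \mathbb{Z}_3$ nor $\mathbb{Z}_7\rtimes \mathbb{Z}_6$ can be realized as $\mathrm{TSG}(\Gamma)$ for any embedding of $C_{14}$.
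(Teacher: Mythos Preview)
Your proof is correct and follows essentially the same approach as the paper: reduce both cases to the nonabelian group $H$ of order $21$, invoke Theorem~\ref{reembed} and geometrization to pass to an isometric action, observe that the involution condition holds vacuously, and then eliminate each alternative supplied by Theorem~\ref{SO4}. Your treatment of the $D_m\times D_m$ case via the coordinate projections is a slightly more explicit version of the paper's one-line remark that all odd-order elements of $D_m\times D_m$ commute, but the argument is the same in substance.
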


\begin{proof}  Suppose that for some embedding $\Gamma$ of $C_{14}$ in $S^3$, $ \mathrm{TSG}_+(\Gamma)$ is $\mathbb{Z}_7 \rtimes \mathbb{Z}_3$ or $\mathbb{Z}_7 \rtimes \mathbb{Z}_6$.  In either case, $G=\mathbb{Z}_7 \rtimes \mathbb{Z}_3\leq  \mathrm{TSG}_+(\Gamma)$.  Now since $C_{14}$ is $3$-connected, we can apply Theorem \ref{reembed}, to re-embed $C_{14}$ in $S^3$ as $\Delta$ such that $G\leq \mathrm{TSG}_+(\Delta)$ and $G$ is induced by an isomorphic finite group of orientation preserving diffeomorphisms of $S^3$. However, by the proof of the Geometrization Conjecture, every finite group of orientation preserving diffeomorphisms of $S^3$ is conjugate to a group of orientation preserving isometries of $S^3$  \cite{Geometrization}.  Thus, we abuse notation and treat $G$ as a group of orientation preserving isometries of $S^3$.  

Since $G$ has no elements of order $2$, it vacuously satisfies the involution condition, and hence by Theorem~\ref{SO4}, $G$ is cyclic, dihedral, a subgroup of $D_m\times D_m$ for some odd $m$, $S_4$, $A_4$, or $A_5$.  But since $|G|=21$, it cannot be dihedral, $S_4$, $A_4$, or $A_5$.  Also, since $G\leq \mathrm{Aut}(C_{14})$ has no element of order $21$, the elements of $G$ of order $3$ and $7$ cannot commute. Thus $G$ cannot be cyclic; and since all elements of odd order in $D_m\times D_m$ commute, $G$ cannot be a subgroup of any $D_m\times D_m$. By this contradiction, we conclude that neither $\mathbb{Z}_7 \rtimes \mathbb{Z}_3$ nor $\mathbb{Z}_7 \rtimes \mathbb{Z}_6$ is realizable for $C_{14}$.
 \end{proof}\medskip

The following corollary summarizes our realizability results.

\begin{cor} A group $G$ is realizable as a topological symmetry group of $C_{14}$ if and only if $G$ is the trivial group, $\mathbb{Z}_2$, $\mathbb{Z}_3$, $\mathbb{Z}_6$, $\mathbb{Z}_7$, $D_3$, or $D_7$. 
\end{cor}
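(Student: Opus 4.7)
The plan is to assemble the corollary directly from the results established earlier, since the forward direction is Theorem~\ref{classification} and the reverse direction follows by eliminating each remaining candidate subgroup of $\mathrm{Aut}(C_{14})\cong \mathrm{PGL}(2,7)$.

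First I would recall the Cameron--Omidi--Tayfeh-Rezaie enumeration of subgroups of $\mathrm{PGL}(2,7)$ already cited before Theorem~\ref{classification}. Using Theorem~\ref{Conclusion}, which establishes that realizable automorphisms of $C_{14}$ can only have order $2$, $3$, $6$, or $7$, I would cross off every subgroup containing an element of order $4$ or $8$. This immediately removes $\mathbb{Z}_4$, $\mathbb{Z}_8$, $D_4$, $D_8$, $S_4$, $\mathrm{PSL}(2,7)$, and $\mathrm{PGL}(2,7)$ itself, leaving only the candidates $\{1\}$, $\mathbb{Z}_2$, $\mathbb{Z}_3$, $\mathbb{Z}_6$, $\mathbb{Z}_7$, $D_2$, $D_3$, $D_6$, $D_7$, $A_4$, $\mathbb{Z}_7\rtimes\mathbb{Z}_3$, and $\mathbb{Z}_7\rtimes\mathbb{Z}_6$ as potential values of $\mathrm{TSG}(\Gamma)$.

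Next I would invoke the non-realizability theorems in turn: Theorem~\ref{noD2} eliminates $D_2$ and $D_6$, Theorem~\ref{noA4} eliminates $A_4$, and Theorem~\ref{nosemidirect} eliminates both $\mathbb{Z}_7\rtimes\mathbb{Z}_3$ and $\mathbb{Z}_7\rtimes\mathbb{Z}_6$. After these exclusions, the only surviving candidates are precisely $\{1\}$, $\mathbb{Z}_2$, $\mathbb{Z}_3$, $\mathbb{Z}_6$, $\mathbb{Z}_7$, $D_3$, and $D_7$. For the converse direction, Theorem~\ref{classification} already exhibits explicit embeddings realizing each of these seven groups, so both implications are in place.

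There is essentially no obstacle here: the corollary is purely a bookkeeping step that collates the realizability statement of Theorem~\ref{classification} with the four non-realizability theorems. The only thing that warrants a brief mention is the use of intrinsic chirality of $C_{14}$, observed at the start of Section~4, to justify that realizability and positive realizability coincide, so that $\mathrm{TSG}(\Gamma) = \mathrm{TSG}_+(\Gamma)$ throughout and the groups ruled out via arguments phrased in terms of $\mathrm{TSG}_+$ (as in Theorem~\ref{nosemidirect}) are indeed ruled out as values of $\mathrm{TSG}(\Gamma)$.
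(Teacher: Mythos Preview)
Your proposal is correct and matches the paper's approach exactly: the corollary is presented as a summary of the preceding results, and the paper offers no separate proof beyond the sentence ``The following corollary summarizes our realizability results.'' Your bookkeeping argument is precisely the intended justification.
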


\section*{Acknowledgements}The first author was partially supported by NSF grant DMS-1607744.  This grant also helped facilitate collaborative meetings of the  three authors.  The second author would like to thank the AWM for a travel grant which partially facilitated this collaboration.  The third author would like to thank Pomona College for hosting him while he was on sabbatical during part of this collaboration.  The authors also want to thank Margaret Robinson who showed them how to compute the elements of $\mathrm{PGL}(2,7)$ using the Magma Computational Algebra System.

\bibliography{Heawood_arXiv_Oct16}

\providecommand{\bysame}{\leavevmode\hbox to3em{\hrulefill}\thinspace}
\providecommand{\MR}{\relax\ifhmode\unskip\space\fi MR }
\providecommand{\MRhref}[2]{%
  \href{http://www.ams.org/mathscinet-getitem?mr=#1}{#2}
}
\providecommand{\href}[2]{#2}
\begin{thebibliography}{10}

\bibitem{MR0069818}
W.~Burnside, \emph{Theory of groups of finite order}, Dover Publications, Inc.,
  New York, 1955, 2d ed. \MR{0069818}

\bibitem{MR2240756}
P.~J. Cameron, G.~R. Omidi, and B.~Tayfeh-Rezaie, \emph{3-designs from {${\rm
  PGL}(2,q)$}}, Electron. J. Combin. \textbf{13} (2006), no.~1, Research Paper
  50, 11. \MR{2240756}

\bibitem{MR3230775}
Dwayne Chambers and Erica Flapan, \emph{Topological symmetry groups of small
  complete graphs}, Symmetry \textbf{6} (2014), no.~2, 189--209. \MR{3230775}

\bibitem{MR2754164}
Dwayne Chambers, Erica Flapan, and John~D. O'Brien, \emph{Topological symmetry
  groups of {$K_{4r+3}$}}, Discrete Contin. Dyn. Syst. Ser. S \textbf{4}
  (2011), no.~6, 1401--1411. \MR{2754164}

\bibitem{MR3049299}
Sangbum Cho and Yuya Koda, \emph{Topological symmetry groups and mapping class
  groups for spatial graphs}, Michigan Math. J. \textbf{62} (2013), no.~1,
  131--142. \MR{3049299}

\bibitem{MR0038078}
H.~S.~M. Coxeter, \emph{Self-dual configurations and regular graphs}, Bull.
  Amer. Math. Soc. \textbf{56} (1950), 413--455. \MR{0038078}

\bibitem{MR0104735}
Leonard~Eugene Dickson, \emph{Linear groups: {W}ith an exposition of the
  {G}alois field theory}, with an introduction by W. Magnus, Dover
  Publications, Inc., New York, 1958. \MR{0104735}

\bibitem{MR586434}
Dragomir~\v{Z}. Djokovi\'{c} and Gary~L. Miller, \emph{Regular groups of
  automorphisms of cubic graphs}, J. Combin. Theory Ser. B \textbf{29} (1980),
  no.~2, 195--230. \MR{586434}

\bibitem{MR1347360}
Erica Flapan, \emph{Rigidity of graph symmetries in the {$3$}-sphere}, J. Knot
  Theory Ramifications \textbf{4} (1995), no.~3, 373--388. \MR{1347360}

\bibitem{MR3181638}
Erica Flapan, Will Fletcher, and Ryo Nikkuni, \emph{Reduced {W}u and
  generalized {S}imon invariants for spatial graphs}, Math. Proc. Cambridge
  Philos. Soc. \textbf{156} (2014), no.~3, 521--544. \MR{3181638}

\bibitem{MR3312620}
Erica Flapan and Emille~Davie Lawrence, \emph{Topological symmetry groups of
  {M}\"obius ladders}, J. Knot Theory Ramifications \textbf{23} (2014), no.~14,
  1450077, 13. \MR{3312620}

\bibitem{MR3208292}
Erica Flapan, Nicole Lehle, Blake Mellor, Matt Pittluck, and Xan Vongsathorn,
  \emph{Symmetries of embedded complete bipartite graphs}, Fund. Math.
  \textbf{226} (2014), no.~1, 1--16. \MR{3208292}

\bibitem{MR2821428}
Erica Flapan, Blake Mellor, and Ramin Naimi, \emph{Complete graphs whose
  topological symmetry groups are polyhedral}, Algebr. Geom. Topol. \textbf{11}
  (2011), no.~3, 1405--1433. \MR{2821428}

\bibitem{MR2931423}
\bysame, \emph{Spatial graphs with local knots}, Rev. Mat. Complut. \textbf{25}
  (2012), no.~2, 493--510. \MR{2931423}

\bibitem{MR3104921}
Erica Flapan, Blake Mellor, Ramin Naimi, and Michael Yoshizawa,
  \emph{Classification of topological symmetry groups of {$K_n$}}, Topology
  Proc. \textbf{43} (2014), 209--233. \MR{3104921}

\bibitem{TSG1}
Erica Flapan, Ramin Naimi, James Pommersheim, and Harry Tamvakis,
  \emph{Topological symmetry groups of graphs embedded in the 3-sphere},
  Comment. Math. Helv. \textbf{80} (2005), no.~2, 317--354. \MR{2142245}

\bibitem{TSG2}
Erica Flapan, Ramin Naimi, and Harry Tamvakis, \emph{Topological symmetry
  groups of complete graphs in the 3-sphere}, J. London Math. Soc. (2)
  \textbf{73} (2006), no.~1, 237--251. \MR{2197381}

\bibitem{MR3008889}
Erica Flapan and Harry Tamvakis, \emph{Topological symmetry groups of graphs in
  {$3$}-manifolds}, Proc. Amer. Math. Soc. \textbf{141} (2013), no.~4,
  1423--1436. \MR{3008889}

\bibitem{MR3543135}
Kathleen Hake, Blake Mellor, and Matt Pittluck, \emph{Topological symmetry
  groups of complete bipartite graphs}, Tokyo J. Math. \textbf{39} (2016),
  no.~1, 133--156. \MR{3543135}

\bibitem{MR2909628}
Toru Ikeda, \emph{Symmetries of spatial graphs and rational twists along
  spheres and tori}, Symmetry \textbf{4} (2012), no.~1, 26--38. \MR{2909628}

\bibitem{MR1892690}
D.~Labbate, \emph{Characterizing minimally 1-factorable {$r$}-regular bipartite
  graphs}, Discrete Math. \textbf{248} (2002), no.~1-3, 109--123. \MR{1892690}

\bibitem{1907.11978}
Emille~Davie Lawrence and Robin~T. Wilson, \emph{The structure of the heawood
  graph}, 2019.

\bibitem{MR3264518}
Blake Mellor, \emph{Complete bipartite graphs whose topological symmetry groups
  are polyhedral}, Tokyo J. Math. \textbf{37} (2014), no.~1, 135--158.
  \MR{3264518}

\bibitem{Geometrization}
John~W. Morgan and Frederick Tsz-Ho Fong, \emph{Ricci flow and geometrization
  of 3-manifolds}, University Lecture Series, vol.~53, American Mathematical
  Society, Providence, RI, 2010. \MR{2597148}

\bibitem{Nik}
Ryo Nikkuni, \emph{{$\Delta Y$}-exchanges and {C}onway-{G}ordon type theorems},
  Intelligence of Low Dimensional Topology \textbf{1812} (2012), 1--14.

\bibitem{MR2911083}
Ryo Nikkuni and Kouki Taniyama, \emph{{$\Delta Y$}-exchanges and the
  {C}onway-{G}ordon theorems}, J. Knot Theory Ramifications \textbf{21} (2012),
  no.~7, 1250067, 14. \MR{2911083}

\bibitem{MR1451857}
Chie Noda, \emph{The topological symmetry group of a canonically embedded
  complete graph in {$S^3$}}, Tokyo J. Math. \textbf{20} (1997), no.~1, 45--50.
  \MR{1451857}

\bibitem{MR0013304}
P.~A. Smith, \emph{Transformations of finite period. {IV}. {D}imensional
  parity}, Ann. of Math. (2) \textbf{46} (1945), 357--364. \MR{0013304}

\end{thebibliography}
\bibliographystyle{amsplain}

\end{document}